\newtheorem{theorem}{Theorem}[section]
\newtheorem{lemma}[theorem]{Lemma}
\newtheorem{cor}[theorem]{Corollary}
\theoremstyle{definition}
\newtheorem{definition}[theorem]{Definition}
\theoremstyle{remark}
\numberwithin{equation}{section}
\newcommand\nutwid{\overset {\text{\lower 3pt\hbox{$\sim$}}}\nu}
\newcommand\omycite[1]{}
\newcommand{\beqs}{\begin{equation*}}
\newcommand{\eeqs}{\end{equation*}}
\newcommand{\beq}{\begin{equation}}
\newcommand{\eeq}{\end{equation}}
\renewcommand{\MR}[1]{\href{http://www.ams.org/mathscinet-getitem?mr={#1}}{MR{#1}}}
\begin{document}
\title[Congruences modulo powers of $7$ for $k$-elongated plane partitions]{Congruences modulo powers of $7$ for $k$-elongated plane partitions }


\author{Dandan Chen}
\address{Department of Mathematics, Shanghai University, People's Republic of China}
\address{Newtouch Center for Mathematics of Shanghai University, Shanghai, People's Republic of China}
\email{mathcdd@shu.edu.cn}
\author{Tianjian Xu}
\address{Department of Mathematics, Shanghai University, People's Republic of China}
\email{mathxtj@163.com}
\author{Siyu Yin*}
\address{Department of Mathematics, Shanghai University, People's Republic of China}
\email{siyuyin@shu.edu.cn, siyuyin0113@126.com}


\subjclass[2010]{ 11P83, 05A17}

\date{}

\keywords{Plane partitions; Congruences; Modular functions; Partition analysis }

\begin{abstract}
The enumeration $d_k(n)$ of $k$-elongated plane partition diamonds has emerged as a generalization of the classical integer partition function $p(n)$. Congruences for $d_k(n)$ modulo certain powers of primes have been proven via elementary means and modular forms by many authors. Recently, Banerjee and Smoot established an infinite family of congruences for $d_5(n)$ modulo powers of 5. In this paper we have discovered an infinite congruence family for $d_3(n)$ and $d_5(n)$ modulo powers of 7.
\end{abstract}

\maketitle


\section{Introduction}
A partition of a positive integer  $n$ is defined as a sequence of positive integers in non-increasing order that sum to $n$. Denoted by $p(n)$, it represents the count of such partitions for $n$. The following distinguished congruences were discovered and proved by Ramanujan in \cite{Ramanujan-1919}:
\begin{align*}
p(5n+4)\equiv 0 &\pmod 5,\\
p(7n+5)\equiv 0 &\pmod 7,\\
p(11n+6)\equiv 0 &\pmod {11},
\end{align*}
for every non-negative integer $n$. In fact, for $\alpha\geq1$,
\begin{align}
p(5^{\alpha}n+\delta_{5,\alpha})&\equiv 0\pmod {5^{\alpha}},\label{5-congruence}\\
p(7^{\alpha}n+\delta_{7,\alpha})&\equiv 0\pmod {7^{[\frac{\alpha+2}{2}]}},\label{7-congruence}\\
p(11^{\alpha}n+\delta_{11,\alpha})&\equiv 0\pmod {11^{\alpha}}\label{11-congruence},
\end{align}
where $\delta_{t,\alpha}$ is the reciprocal modulo $t^{\alpha}$ of 24. \eqref{5-congruence} and \eqref{7-congruence} were first proved by Watson \cite{Watson-1938} in 1938. For an elementary of \eqref{5-congruence} see Hirschhorn and Hunt \cite{Hirschhorn-1981} and for an elementary proof of \eqref{7-congruence} see Garvan \cite{Garvan-1984}. \eqref{11-congruence} was proved by Atkin \cite{Atkin-1967} in 1967.

Over the years, Andrews and Paule \cite{Andrews-Paule-2022}  published an extremely influential series of papers developing an algorithmic methodology on MacMahon's partition analysis. Among the many contributions of this series of papers has been the development of the $k$-elongated plane partition function $d_k(n)$, which is enumerated by the following generating function
\begin{align}\label{Generating-function}
D_k(q):=\sum_{n=0}^{\infty}d_k(n)q^n=\prod_{n=0}^{\infty}\frac{(1-q^{2n})^k}{(1-q^n)^{3k+1}}.
\end{align}
The number $d_k(n)$ of partitions is found by summing the links of the $k$-elongated partition diamonds of length $n$. They then discovered several congruences for $d_1$, $d_2$, and $d_3$ modulo certain powers of primes, primarily using the Mathematica package RaduRK implemented by Smoot \cite{Smoot-2021}, which is based on the Ramanujan-Kolberg algorithm presented by Radu \cite{Radu-2015}.

Since the inception of the function $d_k(n)$, various authors have extended its congruence properties through elementary $q$-series manipulations and modular forms. da Silva, Hirschhorn, and Sellers \cite{Silva-Hirschhorn-Sellers-2022} gave elementary proofs of several congruences for $d_k(n)$ found by Andrews and Paule \cite{Andrews-Paule-2022} and added new individual congruences modulo small prime powers. Yao \cite{Yao-2024} provided elementary proofs of the congruences modulo 81, 243, and 729 for $d_k(n)$ conjectured by Andrews and Paule \cite[Conjectures 1 and 2]{Andrews-Paule-2022}. Baruah, Das, and Talukdar \cite{Baruah-Das_Talukdar-2023} found infinite families of congruences for $d_k(n)$ modulo powers of 2 and 3 and proved the following refinement of the result of da Silva, Hirschhorn, and Sellers \cite{Silva-Hirschhorn-Sellers-2022} on the existence of infinite congruence families for $d_k(n)$ modulo prime powers.
Besides, utilizing the localization method, Banerjee and Smoot \cite{Banerjee-Smoot-2025} gave the congruence results of $d_5(n)$ by various powers of 5.

In this paper, we focus on the  divisibility of 7 for $d_3(n)$ and $d_5(n)$. Following the method in \cite{Chen-Chen-Garvan-2024}, we will prove the following theorems:
\begin{theorem}\label{D3-7-power}\cite[Conjecture 1.4]{Banerjee-Smoot-2025-7}
For all $\alpha\geq1$ and all $n\geq0$ we have
\begin{align}\label{D3-7-formula}
d_3(7^{\alpha}n+\lambda_{\alpha})\equiv0\pmod{7^{\lfloor\frac{\alpha}{2}\rfloor}},
\end{align}
where $\lambda_{\alpha}=\frac{5\cdot7^{\alpha}+1}{6}$.
\end{theorem}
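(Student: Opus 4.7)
The plan is to adapt Watson's $U$-sequence method, as refined for $7$-power congruences in \cite{Chen-Chen-Garvan-2024}, to the generating function $D_3(q)$ in \eqref{Generating-function}. Writing $D_3(q) = q^{1/6}\,\eta(2\tau)^{3}/\eta(\tau)^{10}$, the natural ambient space is modular functions on $\Gamma_0(14)$. I would first package the arithmetic progression $7^\alpha n + \lambda_\alpha$ as an iterated application of the Atkin $U_7$-operator on $D_3$, multiply by a suitable eta-quotient $\xi_\alpha(\tau)$ that cancels both the fractional $q$-power and any growth at cusps other than $\infty$, and obtain a sequence of genuine weight-zero modular functions $\Phi_\alpha(\tau)$ whose $q$-expansions encode $d_3(7^\alpha n+\lambda_\alpha)$.

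The technical core is to find a Hauptmodul $t(\tau)$ for the relevant genus-zero curve and to establish an \emph{explicit modular equation} describing the action of $U_7$ on $\mathbb{Z}[t]$ together with one or two auxiliary generators. Because each single $U_7$ gains only a \emph{half} power of $7$ on average, the induction must be set up with two interleaved families $L_\alpha, R_\alpha$, so that one full factor of $7$ is produced per two-step cycle; this is precisely the pattern that yields the exponent $\lfloor \alpha/2 \rfloor$. Concretely, I would prove an identity of the shape
\[
U_7\bigl(\xi\cdot t^{j}\bigr) \;=\; \sum_{i} a_{ij}\, t^{i},
\]
where the coefficients $a_{ij}\in\mathbb{Z}$ obey explicit lower bounds on their $7$-adic valuation. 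The identity is a finite check: both sides lie in a space of modular functions with a priori bounded pole order at $\infty$, so by the valence formula it suffices to compare a known finite number of initial Fourier coefficients.

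Granted the modular equation, \eqref{D3-7-formula} follows by induction on $\alpha$: one shows $\Phi_\alpha \in 7^{\lfloor \alpha/2\rfloor}\,\mathcal{M}$ for a finitely generated $\mathbb{Z}[t]$-module $\mathcal{M}$ with an explicit $7$-integral basis, the inductive step being a direct application of the $U_7$-identity above; reading the divisibility off the $q$-expansion of $\Phi_\alpha$ then yields the desired congruence on $d_3(7^\alpha n + \lambda_\alpha)$. The main obstacle, in my view, is the middle step: selecting the Hauptmodul, the auxiliary eta-quotient $\xi_\alpha$, and the target module $\mathcal{M}$ so that the modular equation takes the precise $7$-adic shape required for the induction to close. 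Once those ingredients are pinned down, the remaining work reduces to a finite Fourier-coefficient verification together with routine $7$-adic bookkeeping.
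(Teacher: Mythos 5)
Your plan is essentially the paper's own proof: the authors follow exactly this Watson-style scheme from \cite{Chen-Chen-Garvan-2024}, defining interleaved sequences $L_{2\alpha+1}=U_7(A_3L_{2\alpha})$, $L_{2\alpha+2}=U_7(L_{2\alpha+1})$, expressing everything in the $\mathbb{Z}[t,t^{-1}]$-module generated by $1,p_0,p_1$ with $t=\eta_7^4/\eta_1^4$, verifying a finite list of fundamental $U_7$-relations by the valence formula (Garvan's ETA package), and closing the induction with $7$-adic valuation bookkeeping that gains one power of $7$ per two-step cycle. The only slight inaccuracy is your ``genus-zero'' framing---$X_0(14)$ has genus $1$---but your allowance for auxiliary generators is precisely how the paper handles this, so the approaches coincide.
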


\begin{theorem}\label{D5-7-power}
For all $\alpha\geq1$ and all $n\geq0$ we have
\begin{align}\label{D5-7-formula}
d_5(7^{\alpha+2}n+\lambda_{\alpha+2})d_5(\lambda_{\alpha})\equiv d_5(7^{\alpha}n+\lambda_{\alpha})d_5(\lambda_{\alpha+2}) \pmod{7^{\lfloor\frac{\alpha}{2}\rfloor+2}},
\end{align}
where
\begin{align*}
\lambda_{\alpha}=
\begin{cases}
    \frac{1+7^{\alpha}}{4},&\text{if}~\alpha~\text{is odd},\\
    \frac{1+3\cdot7^{\alpha}}{4},&\text{if}~ \alpha~\text{is even}.
\end{cases}
\end{align*}
\end{theorem}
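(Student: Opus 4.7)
My plan is to adapt the modular-function approach of \cite{Chen-Chen-Garvan-2024}, used to establish Theorem~\ref{D3-7-power}, to the ``internal'' form required for $d_5$. Set
\[
L_\alpha(q) := \sum_{n\geq 0} d_5(7^{\alpha}n+\lambda_\alpha)\,q^n,
\]
so that \eqref{D5-7-formula} is equivalent to the power-series identity
\[
d_5(\lambda_\alpha)\,L_{\alpha+2}(q) \;\equiv\; d_5(\lambda_{\alpha+2})\,L_\alpha(q) \pmod{7^{\lfloor\alpha/2\rfloor+2}}.
\]
Multiply $L_\alpha(q)$ by a suitable eta-quotient $\Phi_\alpha(q)$ so that $F_\alpha(q):=\Phi_\alpha(q)L_\alpha(q)$ becomes, up to a rational power of $q$, a modular function on $\Gamma_0(N)$ with controlled pole orders at every cusp, for some level $N$ with $7\mid N$. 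After verifying that $d_5(\lambda_\alpha)$ is a $7$-adic unit (a direct check from \eqref{Generating-function} for small $\alpha$ and an induction thereafter), normalize by $G_\alpha(q):=F_\alpha(q)/d_5(\lambda_\alpha)\in 1+q\,\Z_7[[q]]$; the target becomes $G_{\alpha+2}\equiv G_\alpha\pmod{7^{\lfloor\alpha/2\rfloor+2}}$, modulated by the harmless ratio $\Phi_{\alpha+2}/\Phi_\alpha$.

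Next I would set up a $U_7$-recursion. Define the operator $\Up:f(q)\mapsto U_7\bigl(\Psi(q)f(q)\bigr)$ for a fixed eta-quotient $\Psi(q)$, chosen so that $\Up$ preserves a finite-rank $\Z_7$-lattice $\mathscr{V}$ of modular functions and so that $G_{\alpha+1}=\Up(G_\alpha)$ up to the normalizing factor. Because $\lambda_\alpha$ depends on the parity of $\alpha$, while the shift $\lambda_{\alpha+2}=49\lambda_\alpha-12$ is uniform in $\alpha$, the natural iteration step is $\Up^2$. Fix a $\Z_7$-basis $\{\varphi_0,\varphi_1,\varphi_2,\ldots\}$ of $\mathscr{V}$ adapted to the cusp at infinity (typically expressed via Hauptmoduls for $\Gamma_0(49)$ or $\Gamma_0(14)$ together with their powers), and compute the matrix $M$ of $\Up^2$ in this basis modulo $7$.

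The crux of the argument is to prove that $\Up^2$ admits a unique $7$-adic fixed point $G_\infty\in\mathscr{V}$ with $G_\infty(0)=1$, and that its linearization at $G_\infty$ is a $7$-adic contraction by the exact factor $7$; equivalently, that $M-I\pmod 7$ has one-dimensional kernel spanned by $G_\infty\bmod 7$ and is bijective on a complementary direct summand. This forces $G_\alpha-G_\infty$ to acquire one extra power of $7$ under each application of $\Up^2$; combined with the initial $7$-adic valuation built into $\Phi_\alpha$, an induction on $\alpha$ (with base cases $\alpha\in\{1,2\}$ verified by direct calculation of enough coefficients of $D_5(q)$) then yields \eqref{D5-7-formula}. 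The principal obstacle is the explicit construction of $\mathscr{V}$ together with a sharp analysis of $M$ modulo powers of $7$: extracting the precise exponent $\lfloor\alpha/2\rfloor+2$, rather than the cruder $\lfloor\alpha/2\rfloor$ that appears in Theorem~\ref{D3-7-power}, will require rather delicate $7$-adic valuation bookkeeping when $U_7$ is applied to each basis element, and it is this quantitative control, plus the identification of the correct pair $(\Phi_\alpha,\Psi)$, where the bulk of the technical work will lie.
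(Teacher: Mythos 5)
There is a genuine gap at the foundation of your plan. You propose to verify that $d_5(\lambda_\alpha)$ is a $7$-adic unit and then normalize $G_\alpha:=\Phi_\alpha L_\alpha/d_5(\lambda_\alpha)\in 1+q\,\Z_7[[q]]$. This is false: for every $\alpha\geq 1$ one has $\lambda_\alpha\equiv 2\pmod 7$ (both $\tfrac{1+7^\alpha}{4}$ and $\tfrac{1+3\cdot 7^\alpha}{4}$ reduce to $1/4\equiv 2$), and $d_5(7n+2)\equiv 0\pmod 7$, which is exactly \eqref{7k+5} with $k=0$ (equivalently it follows from \eqref{L1}); hence $7\mid d_5(\lambda_\alpha)$ for all $\alpha\geq1$. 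Your normalized series therefore leaves $\Z_7[[q]]$, the finite-rank $\Z_7$-lattice $\mathscr{V}$ is not preserved after the division, and the fixed-point/contraction argument built on it cannot be run as stated. This is not a cosmetic defect: the divisibility $7\mid d_5(\lambda_\alpha)$ is precisely why the theorem is stated in the cross-multiplied (proportionality) form, and the paper uses it in the opposite direction. The actual proof first establishes $L_{\alpha+2}\equiv x_\alpha L_\alpha\pmod{7^{\lfloor\alpha/2\rfloor+1}}$ for an integer $x_\alpha$ (Theorem \ref{congruence equation}), deduces $d_5(7^{\alpha+2}n+\lambda_{\alpha+2})\equiv x_\alpha\, d_5(7^{\alpha}n+\lambda_{\alpha})$ to that modulus via \eqref{D5-L-alpha}, and only then gains the final exponent $\lfloor\alpha/2\rfloor+2$ in \eqref{D5-7-formula} because $7\mid d_5(\lambda_\alpha)$ and $7\mid d_5(7^\alpha n+\lambda_\alpha)$ when one cross-multiplies.

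Beyond this, your text is a program rather than a proof: the pair $(\Phi_\alpha,\Psi)$, the lattice $\mathscr{V}$, the fixed point $G_\infty$, the assertion that $M-I\bmod 7$ has one-dimensional kernel, and the claim that $\Up^2$ contracts by exactly one factor of $7$ are all deferred to ``technical work,'' and that quantitative content is the theorem. The paper's mechanism is different and avoids any division by non-units: every iterate $L_\alpha$ is expanded in the rank-three module spanned by $p_0t^n$, $p_1t^n$, $t^n$ with explicit $7$-adic valuation templates coming from the appendix relations (Lemmas \ref{5-concrete-relations}--\ref{L-odd even}), and instead of a contraction it tracks the determinants $D^{(\alpha)}(l_{m,i},l_{n,j})=l^{(\alpha)}_{m,i}l^{(\alpha+2)}_{n,j}-l^{(\alpha+2)}_{m,i}l^{(\alpha)}_{n,j}$ (Lemma \ref{5-main lamma}) to show that the coefficient vectors of $L_\alpha$ and $L_{\alpha+2}$ become proportional to increasing $7$-adic precision; the constant $x_\alpha$ is then extracted from a single coefficient ($l^{(2\alpha)}_{1,2}$, resp.\ $l^{(2\alpha+1)}_{3,0}$) whose $7$-adic valuation is exactly $1$. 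If you wish to salvage a fixed-point flavour, you would at least have to normalize by such an exact-valuation-one coefficient rather than by $d_5(\lambda_\alpha)$, and then supply the valuation bookkeeping your sketch leaves open.
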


Besides, utilizing the generating function of $D_k$, we have
\begin{cor}\label{k-modulo-7}
For $k\geq0$, we have
\begin{align}
\label{7k-7}d_{7k}(7n+5)\equiv0\pmod7,\\
\label{7k+5}d_{7k+5}(7n+2)\equiv0\pmod7.
\end{align}
\end{cor}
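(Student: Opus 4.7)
The plan is to reduce both congruences, via the Frobenius-type identity $(q;q)_\infty^7 \equiv (q^7;q^7)_\infty \pmod 7$, to one classical fact (Ramanujan's congruence for $p$) and one auxiliary congruence for $d_5$; the latter will be the main obstacle.

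First, from \eqref{Generating-function} I would factor $D_k(q) = D_0(q)\, A(q)^k$, where $A(q) := (q^2;q^2)_\infty / (q;q)_\infty^3$ and $D_0(q) := 1/(q;q)_\infty = \sum_{n \geq 0} p(n)\,q^n$. The Frobenius identity yields $A(q)^7 \equiv A(q^7) \pmod 7$, hence
\begin{align*}
D_{7k}(q) \equiv D_0(q)\, A(q^7)^k \pmod 7, \qquad D_{7k+5}(q) \equiv D_5(q)\, A(q^7)^k \pmod 7.
\end{align*}
Since $A(q^7)^k = \sum_{j \geq 0} a_{k,j}\, q^{7j}$ is a power series in $q^7$, extracting the coefficient of $q^{7n+r}$ yields
\begin{align*}
d_{7k}(7n+5) &\equiv \sum_{j} a_{k,j}\, p(7(n-j)+5) \pmod 7,\\
d_{7k+5}(7n+2) &\equiv \sum_{j} a_{k,j}\, d_5(7(n-j)+2) \pmod 7.
\end{align*}
Ramanujan's classical $p(7m+5) \equiv 0 \pmod 7$ annihilates the first sum termwise, giving \eqref{7k-7}.

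For \eqref{7k+5}, the same reduction shows that it suffices to establish the auxiliary statement $d_5(7m+2) \equiv 0 \pmod 7$ for all $m \geq 0$. This is exactly the $k=0$ instance of \eqref{7k+5}, so the reduction cannot bootstrap it and an independent argument is required. (Note that Theorem \ref{D5-7-power} with $\alpha = 1$ only yields $d_5(7n+2)\, d_5(86) \equiv 0 \pmod{49}$, which is insufficient without prior information on $d_5(86) \pmod 7$, so one cannot simply quote the main theorem.)

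To establish the auxiliary congruence I would apply the Frobenius identity once more to obtain
\begin{align*}
D_5(q) \equiv \frac{\psi(q)^2\,(q^2;q^2)_\infty}{(q^7;q^7)_\infty^2} \pmod 7, \qquad \psi(q) := \frac{(q^2;q^2)_\infty^2}{(q;q)_\infty} = \sum_{n \geq 0} q^{n(n+1)/2}.
\end{align*}
Since $(q^7;q^7)_\infty^{-2}$ is a power series in $q^7$, it suffices to show $[q^{7m+2}]\,\psi(q)^2(q^2;q^2)_\infty \equiv 0 \pmod 7$ for every $m \geq 0$. I would verify this by a $7$-dissection: expand $\psi(q)^2 = \sum_{i,j \geq 0} q^{T_i+T_j}$ with $T_n = n(n+1)/2$, expand $(q^2;q^2)_\infty$ via Euler's pentagonal number theorem, and check that the signed count of triples $(i,j,k)$ with $T_i + T_j + k(3k-1) \equiv 2 \pmod 7$ is divisible by $7$. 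This is the principal obstacle: organizing the triples so that the signed contributions visibly pair up into packets of $7$ requires a delicate argument in the spirit of Ramanujan's original $7$-dissection for $p(7n+5) \equiv 0 \pmod 7$.
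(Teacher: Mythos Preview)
Your reduction of both congruences via the Frobenius identity $J_1^7\equiv J_7\pmod 7$ is exactly what the paper does, and your treatment of \eqref{7k-7} matches the paper's line for line.

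The gap is in the second half. You correctly identify that \eqref{7k+5} reduces to the base case $d_5(7m+2)\equiv 0\pmod 7$, but you do not prove this base case; you outline a $7$-dissection of $\psi(q)^2(q^2;q^2)_\infty$ and acknowledge that carrying it out is ``the principal obstacle.'' As written this is a proposal, not a proof. (Such a direct dissection is not obviously routine: the relevant residue classes of $T_i+T_j+k(3k-1)$ do not decouple into a visible $7$-fold symmetry the way the single-product case for $p(7n+5)$ does.)

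The paper closes this gap differently. It invokes equation \eqref{L1}, namely $L_1\equiv 21t^4\pmod{7^2}$, where by \eqref{D5-L-alpha} one has
\[
L_1=U_{A_5}(1)=q^2\,\frac{J_7^{16}}{J_{14}^5}\sum_{n\ge0}d_5(7n+2)q^n.
\]
Since $7\mid 21$, this gives $d_5(7n+2)\equiv 0\pmod 7$ immediately. The identity \eqref{L1} itself is established in Lemma~\ref{L1-L3} by combining the modular equation \eqref{modular-identity} with the Group~VI relations in the Appendix (explicit formulas for $U_{A_5}(t^{-j})$, $3\le j\le 9$), all of which are verified by modular-function methods on $\Gamma_0(14)$. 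So the paper replaces your proposed combinatorial $7$-dissection with a modular-function computation that is already part of its Section~\ref{sec-D5} machinery.
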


\begin{proof}
We extract the items of $q^{7n+5}$ in the formula below
\begin{align*}
D_{7k}=\sum_{n=0}^{\infty}d_{7k}(n)q^n=\frac{J_2^{7k}}{J_1^{21k+1}}\equiv\frac{J_{14}^k}{J_7^{3k}J_1}\pmod 7,
\end{align*}
recalling the fact that $p(7n+5)\equiv0\pmod 7$, we directly get \eqref{7k-7}. Similarly, \eqref{7k+5} can be proved by \eqref{L1}.
\end{proof}

The paper is organized as follows. In Section \ref{sec-pre} we introduce basic notations, the necessary background, and algorithms from the theory of modular functions. In Section \ref{sec-lemma} we derive modular equations on $\Gamma_0(14)$. In Sections \ref{sec-D3} and \ref{sec-D5} we apply the theory of modular functions to prove our Theorems \ref{D3-7-power} and \ref{D5-7-power}.

\section{Preliminaries}\label{sec-pre}
In this paper, we use the following conventions: $\mathbb{N}=\{0,1,2,\cdots\}$ denotes the set of nonnegative integers. The complex upper half plane is denoted by $\mathbb{H}:=\{\tau\in\mathbb{C}:\Im(\tau)\textgreater 0\}$. We will also use the short hand notation for the Dedekind eta function:
\begin{align*}
\eta_n(\tau):=\eta(n\tau)=q^{\frac{n}{24}}\prod_{k=1}^\infty(1-q^{nk}),~~n\in\mathbb{Z},~\tau\in\mathbb{H}.
\end{align*}
For $\theta$-products we use $J_b:=\prod_{k=1}^\infty(1-q^{kb})$. Throughout this paper, for $x\in\mathbb{R}$ the symbol $\lfloor x\rfloor$ denotes the largest integer less than or equal to $x$, and $\lceil x\rceil$ denotes the smallest integer greater than or equal to $x$.

Let $f=\sum_{n\in\mathbb{Z}}a_nq^n$, $f\neq 0$, be such that $a_n=0$ for almost all $n\textless 0$. We define the order of $f$ (with respect to $q$) as the smallest integer $N$ such that $a_N\neq0$ and write $N=ord_q(f)$. For instance, we let $F=f\circ t=\sum_{n\in\mathbb{Z}}a_nt^n$ where $t=\sum_{n\geq 1}b_nq^n$, then the $t$-order of $F$ is defined to be the smallest integer $N$ such that $a_N\neq 0$ and write $N=ord_t(F)$.

\begin{definition}
For $f:\mathbb{H}\rightarrow\mathbb{C}$ and $m\in \mathbb{N}^{\ast}$ we define $U_m(f):\mathbb{H} \rightarrow \mathbb{C}$ by
\begin{align*}
U_m(f)(\tau):=\frac{1}{m}\sum_{\lambda=0}^{m-1}f(\frac{\tau+\lambda}{m}),~\tau\in\mathbb{H}.
\end{align*}
\end{definition}
$U_m$ is linear (over $\mathbb{C}$); in addition, it is easy to verify that
\begin{align*}
U_{mn}=U_m\circ U_n=U_n\circ U_m,~m,n\in\mathbb{N}^{\ast}.
\end{align*}
The operators $U_m$, introduced by Atkin and Lehner \cite{Atkin-Lehner-1970}, are closely related to Hecke operators. They typically arise in the context of partition congruences \cite{Andrews-1976} mostly because of the property: if
\begin{align*}
f(\tau)=\sum_{n=-\infty}^{\infty}f_nq^n~~(q=e^{2\pi i\tau}),
\end{align*}
then
\begin{align*}
U_m(f)(\tau)=\sum_{n=-\infty}^{\infty}f_{mn}q^n.
\end{align*}

\begin{definition}\label{t-p-y}
Let $t, p_0, p_1$ be functions defined on $\mathbb{H}$ as follows:
\begin{align*}
t:=\frac{\eta_{7}^4}{\eta_1^4},~~~p_0:=\frac{1}{7}(8\frac{\eta_2^7\eta_7}{\eta_1^7\eta_{14}}-1),~~~p_1:=\frac{\eta_{2}^4\eta_7^{4}}{\eta_1^4\eta_{14}^{4}},
\end{align*}
 which have Laurent series expansions in powers of $q$ with coefficients in $\mathbb{Z}$.
\end{definition}
Remark: Since $(1-q)^7\equiv1-q^7\pmod 7$, we have $\frac{J_{7k}}{J_k^7}\equiv1\pmod7$, which indicates $p_0$ have Laurent series expansions in powers of $q$ with coefficients in $\mathbb{Z}$.

\begin{definition}
A map $a:\mathbb{Z}\rightarrow\mathbb{Z}$ is called a discrete  function if it has finite support. A map $a:\mathbb{Z}\times\mathbb{Z}\rightarrow\mathbb{Z}$ is called discrete array if for each $i\in\mathbb{Z}$ the map $a(i,-):\mathbb{Z}\rightarrow\mathbb{Z},k\mapsto a(i,,k)$, has finite support.
\end{definition}

\section{Modular equation}
\label{sec-lemma}

Our proof relies on the identities in the Appendix. All these identities can be proved using modular function theory.
Regarding to the valence formula in the modular forms, Garvan has written a MAPLE package called ETA which can prove Group I--IX in the Appendix automatically. See
\begin{align}
\label{r:eta}
https://qseries.org/fgarvan/qmaple/ETA/.
\end{align}
The tutorial for this see \cite{gtutorial}.

\begin{theorem}\cite[Theorem 2.6]{Chen-Chen-Garvan-2024}\label{def-aj-t}
With $t=t(\tau)$ as in Definition \ref{t-p-y} we let
\begin{align}
\label{t-a0}a_0(t)=&t;\\
\label{t-a1}a_1(t)=&7^2t^2+4\cdot7t;\\
\label{t-a2}a_2(t)=&7^4t^3+4\cdot7^3t^2+46\cdot7t;\\
\label{t-a3}a_3(t)=&7^6t^4+4\cdot7^5t^3+46\cdot7^3t^2+272\cdot7t;\\
\label{t-a4}a_4(t)=&7^8t^5+4\cdot7^7t^4+46\cdot7^5t^3+272\cdot7^3t^2+845\cdot7t;\\
\label{t-a5}a_5(t)=&7^{10}t^6+4\cdot7^9t^5+46\cdot7^7t^4+272\cdot7^5t^3+845\cdot7^3t^2+176\cdot7^2t;\\
\label{t-a6}a_6(t)=&7^{12}t^7+4\cdot7^{11}t^6+46\cdot7^9t^5+272\cdot7^7t^4+845\cdot7^5t^3+176\cdot7^4t^2+82\cdot7^2t.
\end{align}
Then
\begin{align}\label{modular-identity}
t(\tau)^7-\sum_{l=0}^6a_l(t(7\tau))t(\tau)^l=0.
\end{align}
\end{theorem}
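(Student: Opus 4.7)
The plan is to recognize both sides of \eqref{modular-identity} as modular functions on a single congruence subgroup and reduce the identity to a finite $q$-expansion check via the valence formula. The function $t(\tau) = \eta_7^4/\eta_1^4$ is the well-known Hauptmodul for the genus-zero modular curve $X_0(7)$, hence modular on $\Gamma_0(7)$ and \emph{a fortiori} on $\Gamma_0(49)$. The function $t(7\tau)$ is modular on $\sigma_7\Gamma_0(7)\sigma_7^{-1}$ with $\sigma_7 = \MAT{7}{0}{0}{1}$, which also contains $\Gamma_0(49)$. Therefore
\[
P(\tau) := t(\tau)^7 - \sum_{l=0}^{6} a_l(t(7\tau))\,t(\tau)^l
\]
is a modular function on $\Gamma_0(49)$, and the goal is to show $P \equiv 0$.

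The reason a degree-$7$ relation of this shape must exist is that the covering $X_0(49)\to X_0(7)$ induced by the inclusion $\Gamma_0(49)\hookrightarrow \Gamma_0(7)$ has degree $[\Gamma_0(7):\Gamma_0(49)] = 7$. Consequently $t(\tau)$ has $7$ Galois conjugates over $\mathbb{C}(t(7\tau))$ inside the function field $\mathbb{C}(X_0(49))$; the elementary symmetric functions of these conjugates are invariant under the Galois action and descend to rational functions, in fact polynomials (by matching pole behaviour at the cusps), in the Hauptmodul $t(7\tau)$. This both explains the degree $7$ and forces the coefficients to take the polynomial form $a_l(t(7\tau))$.

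To verify the explicit identity I would carry out the following three steps. First, use Ligozat's eta-quotient valuation formula to compute $\ord_c(t(\tau))$ and $\ord_c(t(7\tau))$ at every cusp $c$ of $\Gamma_0(49)$, and deduce an upper bound $B$ for the total pole order of $P(\tau)$ at all cusps other than $i\infty$. Second, invoke the valence formula: any modular function on $\Gamma_0(49)$ that is holomorphic on $\mathbb{H}$ and whose $q$-expansion at $i\infty$ vanishes to order exceeding $B$ must be identically zero. Third, expand $P(\tau)$ in powers of $q$ to precision $B+1$ and check that all coefficients vanish. This entire routine is automated in Garvan's MAPLE package ETA cited at \eqref{r:eta}.

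The main obstacle is producing the coefficient polynomials $a_l(t)$ in the first place, since the valence-formula step only \emph{verifies} a given candidate. A practical route is an ansatz: from \eqref{t-a0}--\eqref{t-a6} one observes $\deg_t a_l = l+1$, so one writes $a_l(t) = \sum_{j=1}^{l+1} c_{l,j}\,t^j$ with unknown integers $c_{l,j}$, substitutes into the conjectured identity, and determines the $c_{l,j}$ by matching sufficiently many low-order $q$-coefficients. Alternatively, one may compute the elementary symmetric functions of the seven Galois conjugates of $t(\tau)$ directly, using an explicit system of coset representatives for $\Gamma_0(49)\backslash \Gamma_0(7)$, and then rewrite them as polynomials in $t(7\tau)$. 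Once the coefficients are fixed, the final verification reduces to the bounded computation above.
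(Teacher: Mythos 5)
Your proposal is correct and matches the approach behind this result: the paper itself does not reprove the identity but cites \cite[Theorem 2.6]{Chen-Chen-Garvan-2024} and notes that such identities are established by exactly the modular-function argument you describe — view everything as a modular function on a suitable $\Gamma_0(N)$, bound cusp orders, and reduce to a finite $q$-expansion check via the valence formula, as automated in Garvan's ETA package. Your structural explanation via the degree-$7$ extension $\mathbb{C}(X_0(49))/\mathbb{C}(t(7\tau))$ (whose conjugates of $t(\tau)$ are the $t((\tau+\lambda)/7)$, the fact later exploited in Lemma \ref{U2-j-jl2}) is also sound, up to the harmless point that this degree comes from the map induced by $\tau\mapsto 7\tau$ rather than literally from the inclusion, the two being Atkin--Lehner conjugate.
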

We define $s:\{0,1,\cdots,6\}\times\{1,2,\cdots,7\}\rightarrow\mathbb{Z}$ to be the unique function satisfying
\begin{align}\label{def-aj}
a_j(t)=\sum_{l=1}^7 s(j,l)7^{\lfloor\frac{7l+j-4}{4}\rfloor}t^l.
\end{align}

\begin{lemma}\label{U2-j-jl2}
For $u:\mathbb{H}\rightarrow\mathbb{C}$ and $j\in\mathbb{Z}$, then
\begin{align}
\label{re-t}&U_7(ut^j)=\sum_{l=0}^6a_l(\tau)U_7(ut^{j+l-7}),
\end{align}
where $t=t(\tau)$ is defined in Definition \ref{t-p-y} and $a_j(\tau)$ are given in \eqref{t-a0}--\eqref{t-a6}.
\end{lemma}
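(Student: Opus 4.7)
The plan is to obtain this identity by applying the operator $U_7$ directly to the modular equation of Theorem \ref{def-aj-t}. First I would rewrite \eqref{modular-identity} as $t(\tau)^7 = \sum_{l=0}^{6} a_l(t(7\tau))\, t(\tau)^l$, multiply both sides by $u(\tau)\, t(\tau)^{j-7}$, and then apply $U_7$ term by term using linearity. This reduces the claim to evaluating each summand $U_7\!\left(a_l(t(7\tau)) \cdot u\, t^{j+l-7}\right)$ individually.

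The main tool needed to finish is the standard ``pull-out'' property of $U_m$: whenever $h$ is a function of period $1$ in $\tau$ (equivalently, a Laurent series in $q = e^{2\pi i\tau}$) and $f$ is arbitrary, one has
\[
U_7\bigl(f(\tau) \cdot h(7\tau)\bigr)(\tau) = h(\tau)\, U_7(f)(\tau).
\]
This follows immediately from the defining average $U_7(g)(\tau) = \frac{1}{7}\sum_{\lambda=0}^{6} g\!\left(\frac{\tau+\lambda}{7}\right)$ after noting that $h\!\left(7 \cdot \frac{\tau+\lambda}{7}\right) = h(\tau+\lambda) = h(\tau)$ by periodicity, so $h(\tau)$ factors out of the sum. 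Since each $a_l(t)$ is a polynomial with integer coefficients and $t(\tau)$ has a Laurent $q$-expansion by Definition \ref{t-p-y}, the function $\tau \mapsto a_l(t(\tau))$ is automatically $1$-periodic, so the identity applies with $h(\tau) = a_l(t(\tau))$ and $f = u\, t^{j+l-7}$.

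Assembling the pieces and adopting the convention $a_l(\tau) := a_l(t(\tau))$ used in the statement of the lemma, I arrive at
\[
U_7(u t^j) = \sum_{l=0}^{6} U_7\bigl(a_l(t(7\tau)) \cdot u\, t^{j+l-7}\bigr) = \sum_{l=0}^{6} a_l(\tau)\, U_7(u t^{j+l-7}),
\]
which is exactly \eqref{re-t}. I do not expect any real obstacle in this argument, because the nontrivial content, namely the explicit modular polynomial of degree $7$ relating $t(\tau)$ and $t(7\tau)$, has already been absorbed into Theorem \ref{def-aj-t}; all that remains here is a formal manipulation of $U_7$ via its averaging definition together with the periodicity of $q$-series.
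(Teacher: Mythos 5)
Your proposal is correct and follows essentially the same route as the paper: multiply the modular equation \eqref{modular-identity} by $ut^{j-7}$, apply $U_7$, and use the standard pull-out property $U_7\bigl(f(\tau)\,h(7\tau)\bigr)=h(\tau)\,U_7(f)(\tau)$ for $1$-periodic $h$. The paper states this in one line; you have simply made the pull-out step explicit, which is exactly the implicit content of its proof.
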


\begin{proof}
The results follows easily from \eqref{modular-identity} by multiplying both sides by $ut^{j-7}$ and applying the operator $U_7$.
\end{proof}

\begin{lemma}\label{U7-order}
Let $u,v_1,v_2,v_3:\mathbb{H}\rightarrow\mathbb{C}$ and $l\in\mathbb{Z}$. Suppose for $l\leq k\leq l+6$,  there exist polynomials $p_k^{(i)}(t)\in\mathbb{Z}[t,t^{-1}]$ such that
\begin{align}\label{ord-1}
U_7(ut^k)=v_1p_k^{(1)}(t)+v_2p_k^{(2)}(t)+v_3p_k^{(3)}(t)
\end{align}
and
\begin{align}\label{ord-2}
ord_t(p_k^{(i)}(t))\geq \lceil\frac{k+s_i}{7}\rceil,
\end{align}
for a fixed integer $s$. Then there exist families of polynomials $p_k^{(i)}(t)\in\mathbb{Z}[t,t^{-1}]$, integer $k\in\mathbb{Z}$, such that \eqref{ord-1} and \eqref{ord-2} hold for all $k\in\mathbb{Z}$.
\end{lemma}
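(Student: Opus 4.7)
The plan is to iterate the recurrence of Lemma \ref{U2-j-jl2} in both directions starting from the given window $l\le k\le l+6$. That recurrence expresses $U_7(ut^j)$ as a $\mathbb Z[t]$-linear combination of $U_7(ut^{j-7}),\dots,U_7(ut^{j-1})$, so it can be used upward to define $p_{k+7}^{(i)}$ in terms of $p_{k}^{(i)},\dots,p_{k+6}^{(i)}$. For the downward direction, the fact that $a_0(t)=t$ in \eqref{t-a0} lets us solve the recurrence for $U_7(ut^{k-1})$ after dividing by $t$, which is precisely why the ambient ring is $\mathbb Z[t,t^{-1}]$ rather than $\mathbb Z[t]$.

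For the upward step, given \eqref{ord-1} at indices $k,k+1,\dots,k+6$, I would substitute the inductive decompositions into
$$U_7(ut^{k+7}) = \sum_{j=0}^{6} a_j(t)\, U_7(ut^{k+j})$$
and collect the coefficients of $v_1,v_2,v_3$ to define
$$p_{k+7}^{(i)}(t) := \sum_{j=0}^{6} a_j(t)\, p_{k+j}^{(i)}(t) \in \mathbb Z[t,t^{-1}].$$
Inspection of \eqref{t-a0}--\eqref{t-a6} shows $\ord_t(a_j)=1$ for every $j\in\{0,\dots,6\}$, so combining with the hypothesis $\ord_t(p_{k+j}^{(i)})\ge \lceil (k+j+s_i)/7\rceil\ge \lceil (k+s_i)/7\rceil$ gives
$$\ord_t\bigl(a_j(t)\,p_{k+j}^{(i)}(t)\bigr)\ge 1+\lceil (k+s_i)/7\rceil = \lceil(k+7+s_i)/7\rceil,$$
which is \eqref{ord-2} at index $k+7$.

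For the downward step, I would apply Lemma \ref{U2-j-jl2} at index $k+6$ and isolate the $j=0$ term, obtaining
$$p_{k-1}^{(i)}(t) := t^{-1}\Bigl(p_{k+6}^{(i)}(t) - \sum_{j=1}^{6} a_j(t)\, p_{k+j-1}^{(i)}(t)\Bigr) \in \mathbb Z[t,t^{-1}].$$
Applying the ceiling identity $\lceil x\rceil - 1 = \lceil x-1\rceil$ to $t^{-1}p_{k+6}^{(i)}$ shows that its $t$-order is at least $\lceil(k-1+s_i)/7\rceil$, and each remaining summand has $t$-order at least $\lceil(k+j-1+s_i)/7\rceil\ge \lceil(k-1+s_i)/7\rceil$; hence $\ord_t(p_{k-1}^{(i)})\ge \lceil(k-1+s_i)/7\rceil$, closing the downward induction.

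The only nontrivial point is the order estimate, which rests on two elementary facts: $\ord_t(a_j)=1$ for every $j\in\{0,\dots,6\}$, and the ceiling arithmetic $\lceil x\rceil\pm 1=\lceil x\pm 1\rceil$ for integer shifts. Together these ensure that \eqref{ord-2} is preserved under both the forward and backward recurrence steps, so iterating in both directions produces families $\{p_k^{(i)}(t)\}_{k\in\mathbb Z}$ satisfying \eqref{ord-1} and \eqref{ord-2} for every $k\in\mathbb Z$.
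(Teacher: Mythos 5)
Your argument is correct and is essentially the paper's proof: both extend the given seven consecutive decompositions in each direction via the recurrence of Lemma \ref{U2-j-jl2}, with the order estimate coming from $\ord_t(a_j)=1$ (equivalently, $a_j(t)t^{-1}\in\mathbb{Z}[t]$) together with the ceiling shift $\lceil x\rceil\pm1=\lceil x\pm1\rceil$. The only difference is cosmetic: the paper writes out the upward step and dismisses the downward one as analogous, whereas you make the downward step (solving for the $a_0(t)=t$ term) explicit.
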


\begin{proof}
Let $N\textgreater l+6$ be an integer and assume by induction that there are families of polynomials $p_k(t)$ such that \eqref{ord-1} and \eqref{ord-2} hold for $l\leq k\leq N-1$. Suppose
\begin{align*}
p_k^{(i)}(t)=\sum_{n\geq\lceil\frac{k+s_i}{7}\rceil}c_i(k,n)t^n, ~~~l\leq k\leq N-1,
\end{align*}
with integers $c_i(k,n)$. Applying Lemma \ref{U2-j-jl2} we get:
\begin{align*}
U_7(ut^N)=&\sum_{j=0}^6a_j(t)U_7(ut^{N+j-7})\\
=&\sum_{j=0}^6a_j(t)\sum_{i=1}^3v_i\sum_{n\geq\lceil\frac{N+j-7+s_i}{7}\rceil}c_i(N+j-7,n)t^n\\
=&\sum_{i=1}^3v_i\sum_{j=0}^6a_j(t)t^{-1}\sum_{n\geq\lceil\frac{N+j+s_i}{7}\rceil}c_i(N+j-7,n-1)t^n.
\end{align*}
Recalling the fact that $a_j(t)t^{-1}$ for $0\leq j\leq6$ is a polynomial in $t$, this determines polynomials $p_N^{(i)}(t)$ with the desired properties. The induction proof for $N\textless l$ is analogous.
\end{proof}

\begin{lemma}\label{U-power}
Let $v_1, v_2, v_3, u:\mathbb{H}\rightarrow\mathbb{C}$ and $l\in\mathbb{Z}$. Suppose for $l\leq k\leq l+6$  there exist polynomials $p_k^{(i)}(t)\in\mathbb{Z}[t,t^{-1}]$, such that
\begin{align}\label{power-1}
U_7(ut^k)=v_1p_k^{(1)}(t)+v_2p_k^{(2)}(t)+v_3p_k^{(3)}(t),
\end{align}
where
\begin{align}\label{power-2}
p_k^{(i)}(t)=\sum_nc_i(k,n)7^{\lfloor\frac{7n-k+\gamma_i}{4}\rfloor}t^n,
\end{align}
with integers $\gamma_i$ and $c_i(k,n)$. Then there exist families of polynomials $p_k^{(i)}(t)\in\mathbb{Z}[t,t^{-1}]$, $k\in\mathbb{Z}$, of the form \eqref{power-2} for which property \eqref{power-1} holds for  all $k\in\mathbb{Z}$.
\end{lemma}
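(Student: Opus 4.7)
The plan is to mimic the inductive scheme of Lemma \ref{U7-order}, but with the $t$-order control \eqref{ord-2} replaced by the 7-adic shape \eqref{power-2}. The hypothesis provides the base range $l \le k \le l+6$; I would extend to all $k \ge l+7$ by upward induction driven by the modular recurrence of Lemma \ref{U2-j-jl2}, and the case $k < l$ is handled by an analogous downward induction, obtained by solving the same recurrence for the lowest-indexed term (after dividing by $a_0(\tau) = t(7\tau)$, whose reciprocal is a Laurent series in $q$ with integer coefficients, so the induction can still be carried out with integer coefficients).

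For the upward step, assume the conclusion holds for $l \le k \le N-1$ with $N \ge l+7$. Combining Lemma \ref{U2-j-jl2} with the explicit form \eqref{def-aj} of $a_j(t)$ gives
\begin{align*}
U_7(ut^N) \;=\; \sum_{j=0}^{6}\sum_{l'=1}^{7} s(j,l')\,7^{\lfloor (7l'+j-4)/4 \rfloor}\,t^{l'}\,U_7(ut^{N+j-7}).
\end{align*}
Substituting the inductive expressions $U_7(ut^{N+j-7}) = \sum_{i=1}^{3} v_i\,p_{N+j-7}^{(i)}(t)$ and re-indexing by $m = n + l'$, the coefficient of $v_i t^m$ is a $\mathbb{Z}$-linear combination of terms of the form $s(j,l')\,c_i(N+j-7,m-l')$ multiplied by a 7-power whose exponent is
\begin{align*}
\left\lfloor \tfrac{7l'+j-4}{4} \right\rfloor \;+\; \left\lfloor \tfrac{7(m-l')-(N+j-7)+\gamma_i}{4} \right\rfloor.
\end{align*}

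The whole argument therefore reduces to the arithmetic claim that this exponent is always at least $\lfloor (7m-N+\gamma_i)/4 \rfloor$. Setting $A := 7l'+j-4$ and $B := 7(m-l')-N-j+7+\gamma_i$, so that $A+B = 7m-N+3+\gamma_i$, this is exactly $\lfloor A/4 \rfloor + \lfloor B/4 \rfloor \ge \lfloor (A+B-3)/4 \rfloor$, which a short case-check on residues of $A, B$ modulo $4$ confirms. This small floor-function calibration is the only real obstacle, and it is precisely the reason the 7-powers in Theorem \ref{def-aj-t} are packaged as $7^{\lfloor (7l+j-4)/4 \rfloor}$: any other packaging would fail to close the induction. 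Once the inequality is granted, the coefficient of $v_i t^m$ in $U_7(ut^N)$ equals $7^{\lfloor (7m-N+\gamma_i)/4 \rfloor}$ times an integer, so a polynomial $p_N^{(i)}(t)$ of the required shape \eqref{power-2} can be read off, closing the induction; everything beyond the floor inequality is routine bookkeeping in parallel to Lemma \ref{U7-order}.
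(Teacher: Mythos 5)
Your proposal is correct and follows essentially the same route as the paper: upward induction on $k$ via the recurrence of Lemma \ref{U2-j-jl2}, substitution of the packaging \eqref{def-aj}, and closing the step with the floor inequality $\lfloor A/4\rfloor+\lfloor B/4\rfloor\geq\lfloor (A+B-3)/4\rfloor$ applied to $A=7l'+j-4$, $B=7(m-l')-(N+j-7)+\gamma_i$, which is exactly the paper's exponent estimate; the downward case is likewise handled analogously (note only that after applying $U_7$ the lowest coefficient is $a_0(t(\tau))=t(\tau)$, and dividing by $t$ is harmless in $\mathbb{Z}[t,t^{-1}]$, rather than needing the $q$-series inverse of $t(7\tau)$).
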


\begin{proof}
Suppose for an integer $N\textgreater l+6$ there are families of polynomials $p_k^{(i)}(t)$, $i\in\{1,2,3\}$, of the form \eqref{power-2} satisfying property \eqref{power-1} for $l\leq k\leq N-1$. We proceed by mathematical induction on $N$. Applying Lemma \ref{U2-j-jl2}  and using the induction base \eqref{power-1} and \eqref{power-2} we attain:
\begin{align*}
U_7(ut^N)=\sum_{j=0}^6a_j(t)\sum_{i=1}^3v_i\sum_nc_i(N+j-7,n)7^{\lfloor{\frac{7n-(N+j-7)+\gamma)i}{4}}\rfloor}t^n.
\end{align*}
Utilizing \eqref{def-aj}  this rewrites into:
\begin{align*}
U_7(ut^N)&=\sum_{j=0}^6\sum_{l=1}^7s(j,l)7^{\lfloor{\frac{7l+j-4}{4}}\rfloor}t^l
\sum_{i=1}^3v_i\sum_nc_i(N+j-7,n)7^{\lfloor{\frac{7n-(N+j-7)+\gamma_i}{4}}\rfloor}t^n\\
&=\sum_{i=1}^3v_i\sum_{j=0}^6\sum_{l=1}^7\sum_ns(j,l)c_i(N+j-7,n-l)7^{\lfloor{\frac{7(n-l)-(N+j-7)+\gamma_i}{4}}\rfloor+\lfloor{\frac{7l+j-4}{4}}\rfloor}t^n.
\end{align*}
The induction step is completed by simplifying the exponent of $7$ as follows:
\begin{align*}
&\lfloor{\frac{7(n-l)-(N+j-7)+\gamma_i}{4}}\rfloor+\lfloor{\frac{7l+j-4}{4}}\rfloor\\
&\geq \lfloor{\frac{7(n-l)-(N+j-7)+\gamma_i+7l+j-4-3}{4}}\rfloor\\
&\geq\lfloor{\frac{7n-N+\gamma_i}{4}}\rfloor.
\end{align*}
The induction proof for $N\textless l$ works analogously.
\end{proof}

\section{Proof of Theorem \ref{D3-7-power}}
\label{sec-D3}
The proof depends on the fundamental relations in the Appendix.
Let $k=3$ in \eqref{Generating-function}, we have the generating function that
\begin{align*}
D_3(q)=\frac{J_2^3}{J_1^{10}}.
\end{align*}
For $f:\mathbb{H}\rightarrow\mathbb{C}$ we define $U_{A_3}(f), U_B(f):\mathbb{H} \rightarrow \mathbb{C}$ by $U_{A_3}(f):=U_7(A_3f)$, and $U_B(f):=U_7(Bf)$, where
\begin{align*}
A_3:=\frac{\eta_2^3\eta_{49}^{10}}{\eta_1^{10}\eta_{98}^3}~~\text{and}~~ B:=1.
\end{align*}
Define $L_0:=1$, and for $\alpha\geq0$ define
\begin{align*}
L_{2\alpha+1}=U_{A_3}(L_{2\alpha}),~~~~~L_{2\alpha+2}=U_B(L_{2\alpha+1}).
\end{align*}
With the $U_7$ operator, it is easily to verify that for $\alpha\geq1$,
\begin{align}
\label{3-series-odd}L_{2\alpha-1}=&q^2\frac{J_7^{10}}{J_{14}^3}\sum_{n=0}^{\infty}d_3(7^{2\alpha-1} n+\lambda_{2\alpha-1})q^n,\\
\label{3-series-even}L_{2\alpha}=&q\frac{J_1^{10}}{J_2^3}\sum_{n=0}^{\infty}d_3(7^{2\alpha} n+\lambda_{2\alpha})q^n,
\end{align}
where
\begin{align*}
\lambda_{\alpha}=\frac{5\cdot7^{\alpha}+1}{6}.
\end{align*}
To prove Theorem \ref{D3-7-power}, following lemmas are needed.

\begin{lemma}\label{3-concrete-relations}
For $j=0,1,2$ there exist discrete functions of $n$, $a_{k,i}(n,j)$ and $b_{k,i}(n,j)$ such that
\begin{align}
\label{A3-p0t}U_{A_3}(p_0t^k)=&p_0\sum_{n\geq\lceil\frac{k+14}{7}\rceil}7^{\lfloor\frac{7n-k-12}{4}\rfloor}\cdot a_{k,0}(n,0)t^n+p_1\sum_{n\geq\lceil\frac{k+15}{7}\rceil}7^{\lfloor\frac{7n-k-15}{4}\rfloor}\cdot a_{k,0}(n,1)t^n\\ \nonumber
&+\sum_{n\geq\lceil\frac{k+18}{7}\rceil}7^{\lfloor\frac{7n-k-15}{4}\rfloor}\cdot a_{k,0}(n,2)t^n,\\
\label{A3-p1t}U_{A_3}(p_1t^k)=&p_0\sum_{n\geq\lceil\frac{k+15}{7}\rceil}7^{\lfloor\frac{7n-k-11}{4}\rfloor}\cdot a_{k,1}(n,0)t^n+p_1\sum_{n\geq\lceil\frac{k+14}{7}\rceil}7^{\lfloor\frac{7n-k-15}{4}\rfloor}\cdot a_{k,1}(n,1)t^n\\
\nonumber&+\sum_{n\geq\lceil\frac{k+18}{7}\rceil}7^{\lfloor\frac{7n-k-14}{4}\rfloor}\cdot a_{k,1}(n,2)t^n,\\
\label{A3-t}U_{A_3}(t^k)=&p_0\sum_{n\geq\lceil\frac{k+14}{7}\rceil}7^{\lfloor\frac{7n-k-11}{4}\rfloor}\cdot a_{k,2}(n,0)t^n+p_1\sum_{n\geq\lceil\frac{k+15}{7}\rceil}7^{\lfloor\frac{7n-k-14}{4}\rfloor}\cdot a_{k,2}(n,1)t^n\\
\nonumber&+\sum_{n\geq\lceil\frac{k+17}{7}\rceil}7^{\lfloor\frac{7n-k-15}{4}\rfloor}\cdot a_{k,2}(n,2)t^n,\\
\label{B-p0t}U_{B}(p_0t^k)=&p_0\sum_{n\geq\lceil\frac{k-3}{7}\rceil}7^{\lfloor\frac{7n-k+2}{4}\rfloor}\cdot b_{k,0}(n,0)t^n+p_1\sum_{n\geq\lceil\frac{k+4}{7}\rceil}7^{\lfloor\frac{7n-k-1}{4}\rfloor}\cdot b_{k,0}(n,1)t^n\\
\nonumber&+\sum_{n\geq\lceil\frac{k+1}{7}\rceil}7^{\lfloor\frac{7n-k-1}{4}\rfloor}\cdot b_{k,0}(n,2)t^n,\\
\label{B-p1t}U_{B}(p_1t^k)=&p_0\sum_{n\geq\lceil\frac{k-3}{7}\rceil}7^{\lfloor\frac{7n-k+2}{4}\rfloor}\cdot b_{k,1}(n,0)t ^n+p_1\sum_{n\geq\lceil\frac{k+4}{7}\rceil}7^{\lfloor\frac{7n-k-1}{4}\rfloor}\cdot b_{k,1}(n,1)t^n\\
\nonumber&+\sum_{n\geq\lceil\frac{k+1}{7}\rceil}7^{\lfloor\frac{7n-k}{4}\rfloor}\cdot
b_{k,1}(n,2)t^n,\\
\label{B-t}U_{B}(t^k)=&\sum_{n\geq\lceil\frac{k}{7}\rceil}7^{\lfloor\frac{7n-k-1}{4}\rfloor}\cdot b_{k,2}(n,2)t^n.
\end{align}
\end{lemma}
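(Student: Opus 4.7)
My plan is to derive Lemma \ref{3-concrete-relations} by combining the two general induction principles already proved, Lemma \ref{U7-order} and Lemma \ref{U-power}, with a finite collection of base-case identities extracted from the Appendix. The seven formulas \eqref{A3-p0t}--\eqref{B-t} all fit the template of these lemmas with $v_1 = p_0$, $v_2 = p_1$, $v_3 = 1$ and with $u$ taking one of six fixed values: $A_3 p_0$, $A_3 p_1$, $A_3$ (for the three $U_{A_3}$ identities), and $p_0$, $p_1$, $1$ (for the three $U_B$ identities, recalling $B=1$). Each identity carries its own shift parameters $(s_i,\gamma_i)$ in the $t$-order bound $\lceil (k+s_i)/7\rceil$ and in the 7-adic valuation bound $\lfloor (7n-k+\gamma_i)/4\rfloor$, determined by the orders of $u$ and of $p_0,p_1$ at the cusps of $\Gamma_0(14)$ together with their 7-adic normalizations.

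Step 1 is to establish each of the seven identities for seven consecutive values of $k$, say $k_0\le k\le k_0+6$ with $k_0$ chosen small (possibly negative) so that the lower endpoint of the predicted $t$-expansion is nontrivial. For each such $(u,k)$, I would expand $U_7(u t^k)$ as an explicit Laurent polynomial in $t$ with coefficients in the $\mathbb{Z}$-module generated by $1,p_0,p_1$. The required relations are precisely those stated in Groups I--IX of the Appendix, all of which can be proved rigorously by the valence formula on $\Gamma_0(14)$; the MAPLE ETA package at \eqref{r:eta} automates this verification. One then reads off the $t$-order and the 7-adic valuation of each coefficient and checks that these match the predicted bounds $(s_i,\gamma_i)$ appearing in \eqref{A3-p0t}--\eqref{B-t}.

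Step 2 is the induction. With the seven base cases in hand, I apply Lemma \ref{U7-order} to extend the $t$-order bound, and Lemma \ref{U-power} to extend the 7-adic valuation bound, to all $k\in\mathbb{Z}$. Both proofs use the modular equation \eqref{modular-identity} of Theorem \ref{def-aj-t} as the sole recursion, so the same family of polynomials $p_k^{(i)}(t)\in\mathbb{Z}[t,t^{-1}]$ satisfies both constraints simultaneously. Extension in the positive direction is immediate from the statements; extension in the negative direction uses the same recursion solved for $U_7(u t^{k-7})$ in terms of $U_7(u t^{k-7+l})$ for $l=1,\dots,6$.

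The main obstacle is the explicit verification in Step 1. There are six seed functions $u$ and seven consecutive values of $k$ to treat, and for each pair one must produce a decomposition of $U_7(u t^k)$ into a $p_0$-component, a $p_1$-component, and a constant-in-$\{p_0,p_1\}$ component, with all three components satisfying sharp divisibility-by-$7$ conditions. The sharpness of the $\gamma_i$ is essential: any slack at the base case would weaken the conclusion and propagate through the induction, ultimately weakening Theorem \ref{D3-7-power}. Identifying the correct Appendix identity to invoke for each seed and verifying the 7-adic valuations without loss, rather than the induction itself, is where the real computational work lies.
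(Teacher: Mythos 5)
Your proposal is correct and follows essentially the same route as the paper: the seven consecutive base cases for each of the six seed functions are exactly the Appendix relations (Groups I--III for $U_{A_3}$ and VII--IX for $U_B$, all certified by the valence formula via the ETA package), and the extension to all $k$ is obtained by applying Lemma \ref{U7-order} for the $t$-order bounds and Lemma \ref{U-power} for the $7$-adic exponent bounds, with the backward direction handled by the analogous induction already noted in those lemmas. This matches the paper's proof, which differs only in being stated more tersely.
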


\begin{proof}
The Appendix lists sixty-three fundamental relations. The seven fundamental relations of Group I fit the pattern of the relation \eqref{A3-p0t} for seven values of $k$. Applying Lemmas \ref{U7-order} and \ref{U-power} immediately proves the statement for all $k\in\mathbb{N}$. The other relations are proved by Group II--VI in the Appendix.
\end{proof}

\begin{lemma}\label{D3-L-alpha}
For integer $\alpha\geq1$ there exist integers $d_{n,i}^{(\alpha)},i=0,1,2$ such that
\begin{align}
\label{L-odd}L_{2\alpha-1}&=p_0\sum_{n\geq2}d_{n,0}^{(2\alpha-1)}7^{\lfloor\frac{7n-13}{4}\rfloor+\alpha-1}t^n+p_1\sum_{n\geq3}d_{n,1}^{(2\alpha-1)}7^{\lfloor\frac{7n-16}{4}\rfloor+\alpha-1}t^n+\sum_{n\geq3}d_{n,2}^{(2\alpha-1)}7^{\lfloor\frac{7n-16}{4}\rfloor+\alpha-1}t^n,\\
\label{L-even}L_{2\alpha}&=d_{0,0}^{(2\alpha)}7^{\alpha}p_0+p_0\sum_{n\geq1}d_{n,0}^{(2\alpha)}7^{\lfloor\frac{7n-4}{4}\rfloor+\alpha}t^n+p_1\sum_{n\geq1}d_{n,1}^{(2\alpha)}7^{\lfloor\frac{7n-7}{4}\rfloor+\alpha}t^n+\sum_{n\geq1}d_{n,2}^{(2\alpha)}7^{\lfloor\frac{7n-7}{4}\rfloor+\alpha}t^n.
\end{align}
\end{lemma}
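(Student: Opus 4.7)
The plan is to prove \eqref{L-odd} and \eqref{L-even} simultaneously by induction on $\alpha \geq 1$, using the defining recursions $L_{2\alpha-1} = U_{A_3}(L_{2\alpha-2})$ and $L_{2\alpha} = U_B(L_{2\alpha-1})$, with the six monomial expansion identities of Lemma \ref{3-concrete-relations} as the computational engine: they describe exactly how $U_{A_3}$ and $U_B$ act on $p_0 t^k$, $p_1 t^k$, and $t^k$.

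For the base case $\alpha = 1$, I would apply \eqref{A3-t} with $k=0$ to $L_1 = U_{A_3}(1)$. The lower summation bounds $\lceil 14/7\rceil = 2$, $\lceil 15/7\rceil = 3$, $\lceil 17/7\rceil = 3$ match those in \eqref{L-odd}, and since $\lfloor(7n-11)/4\rfloor \geq \lfloor(7n-13)/4\rfloor$ (and similarly for the other two components), any surplus power of $7$ is absorbed into the integer coefficients $d_{n,i}^{(1)}$. Then $L_2 = U_B(L_1)$ is obtained by applying \eqref{B-p0t}--\eqref{B-t} termwise and swapping the order of summation. One subtlety arises here: the constant term of $L_2$ in $t$ must be divisible by $7$, whereas the term-by-term bound only predicts divisibility by $7^0$. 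Isolating this requires either a short direct computation of the relevant boundary $b$-coefficients or invoking a refined version of \eqref{B-p0t} and \eqref{B-p1t} at $m=0$.

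For the inductive step, assuming \eqref{L-odd} at level $\alpha$, I expand
\begin{align*}
L_{2\alpha} \;=\; U_B(L_{2\alpha-1}) \;=\; \sum_{i=0}^{2}\sum_{n} d_{n,i}^{(2\alpha-1)}\, 7^{e_i(n)+\alpha-1}\, U_B(v_i t^n),
\end{align*}
with $(v_0,v_1,v_2) = (p_0, p_1, 1)$ and $e_i(n)$ the input exponent, then substitute the matching identity from Lemma \ref{3-concrete-relations} for each $U_B(v_i t^n)$ and regroup. The new coefficients $d_{m,j}^{(2\alpha)}$ appear as finite integer combinations $\sum_n d_{n,i}^{(2\alpha-1)}\, b_{n,i}(m,j)$ scaled by appropriate powers of $7$. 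The analogous passage from \eqref{L-even} at $\alpha$ to \eqref{L-odd} at $\alpha+1$ via $U_{A_3}$ and \eqref{A3-p0t}--\eqref{A3-t} is entirely parallel.

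The main obstacle is verifying, at every step, the floor-function inequality
\begin{align*}
\left\lfloor\frac{7n+c_{\text{in}}}{4}\right\rfloor + \left\lfloor\frac{7m-n+c_U}{4}\right\rfloor + 1 \;\geq\; \left\lfloor\frac{7m+c_{\text{out}}}{4}\right\rfloor
\end{align*}
for all pairs $(n,m)$ in the admissible range, which is what certifies the gain of one power of $7$ per application of $U_B$ or $U_{A_3}$. The naive estimate $\lfloor x\rfloor+\lfloor y\rfloor \geq \lfloor x+y\rfloor - 1$ is not tight enough on its own, so I would close the inequality by a short case analysis on $n \bmod 4$, combined with the lower bounds on $n$ coming from \eqref{L-odd}/\eqref{L-even} and on $m$ coming from the $U$-expansion identities. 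Together with the boundary congruence handling the $m=0$ constant term noted above, the induction proceeds mechanically, with the constants $c_{\text{in}}, c_U, c_{\text{out}}$ tabulated once and for all from the six identities of Lemma \ref{3-concrete-relations}.
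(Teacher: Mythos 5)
Your proposal takes essentially the same route as the paper's proof: induction alternating the $U_{A_3}$ and $U_B$ steps, base case from \eqref{A3-t} at $k=0$, termwise substitution of the expansions in Lemma \ref{3-concrete-relations} followed by verification of the floor-exponent inequalities, and a separate treatment of the boundary constant $p_0$-term, which the paper resolves exactly as you suggest, by a direct computation showing $U_B(p_0t^2)=14p_0+7f(p_0,p_1,t)$ so that this coefficient acquires the missing factor of $7$. The only slight imprecision is that the extra power of $7$ is gained only at the $U_B$ (odd-to-even) steps, not per application of $U_{A_3}$ as well, but once the constants are tabulated from Lemma \ref{3-concrete-relations} your verification reduces to the same inequalities checked in the paper.
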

\begin{proof}
By \eqref{A3-t} we see that $L_1$ has form given in \eqref{L-odd}. Assume that $L_{2\alpha-1}$ has the form given in \eqref{L-odd} for a fixed $\alpha$, then
\begin{align}\label{even-odd-1}
L_{2\alpha}&=U_B(L_{2\alpha-1})\\
\nonumber
&=\sum_{k\geq2}d_{k,0}^{(2\alpha-1)}7^{\lfloor\frac{7n-13}{4}\rfloor+\alpha-1}U_B(p_0t^k)+\sum_{k\geq3}d_{k,1}^{(2\alpha-1)}7^{\lfloor\frac{7n-16}{4}\rfloor+\alpha-1}U_B(p_1t^k)\\
\nonumber&+\sum_{k\geq3}d_{k,2}^{(2\alpha-1)}7^{\lfloor\frac{7n-16}{4}\rfloor+\alpha-1}U_B(t^k)   .
\end{align}
We aim to show that the form of each sum on the right-hand side of \eqref{even-odd-1} satisfies the form in \eqref{L-even}. From \eqref{B-p0t}, \eqref{B-p1t}, and \eqref{B-t}, we get
\begin{align*}
L_{2\alpha}=&p_0\sum_{k=2}^{\infty}\sum_{n\geq\lceil\frac{k-3}{7}\rceil}b_{k,0}(n,0)d_{k,0}^{(2\alpha-1)}7^{\lfloor\frac{7n-k+2}{4}\rfloor+\lfloor\frac{7k-13}{4}\rfloor+\alpha-1}t^n\\
&+p_1\sum_{k=2}^{\infty}\sum_{n\geq\lceil\frac{k+4}{7}\rceil}b_{k,0}(n,1)d_{k,0}^{(2\alpha-1)}7^{\lfloor\frac{7n-k-1}{4}\rfloor+\lfloor\frac{7k-13}{4}\rfloor+\alpha-1}t^n\\
&+\sum_{k=2}^{\infty}\sum_{n\geq\lceil\frac{k+1}{7}\rceil}b_{k,0}(n,2)d_{k,0}^{(2\alpha-1)}7^{\lfloor\frac{7n-k-1}{4}\rfloor+\lfloor\frac{7k-13}{4}\rfloor+\alpha-1}t^n\\
&+p_0\sum_{k=3}^{\infty}\sum_{n\geq\lceil\frac{k-3}{7}\rceil}b_{k,1}(n,0)d_{k,1}^{(2\alpha-1)}7^{\lfloor\frac{7n-k+2}{4}\rfloor+\lfloor\frac{7k-16}{4}\rfloor+\alpha-1}t^n\\
&+p_1\sum_{k=3}^{\infty}\sum_{n\geq\lceil\frac{k+4}{7}\rceil}b_{k,1}(n,1)d_{k,1}^{(2\alpha-1)}7^{\lfloor\frac{7n-k-1}{4}\rfloor+\lfloor\frac{7k-16}{4}\rfloor+\alpha-1}t^n\\
&+\sum_{k=3}^{\infty}\sum_{n\geq\lceil\frac{k+1}{7}\rceil}b_{k,1}(n,2)d_{k,1}^{(2\alpha-1)}7^{\lfloor\frac{7n-k}{4}\rfloor+\lfloor\frac{7k-16}{4}\rfloor+\alpha-1}t^n\\
&+\sum_{k=3}^{\infty}\sum_{n\geq\lceil\frac{k}{7}\rceil}b_{k,2}(n,2)d_{k,2}^{(2\alpha-1)}7^{\lfloor\frac{7n-k-1}{4}\rfloor+\lfloor\frac{7k-16}{4}\rfloor+\alpha-1}t^n.
\end{align*}
Since the proofs are similar we just consider the items of $p_0t^k$ in $L_{2\alpha}$. For $k\geq2$ and $n\geq\lceil\frac{k-3}{7}\rceil$,
\begin{align*}
\lfloor\frac{7n-k+2}{4}\rfloor+\lfloor\frac{7k-13}{4}\rfloor+\alpha-1\geq\lfloor\frac{7n-4}{4}\rfloor+\alpha.
\end{align*}
For $k\geq3$ and $n\geq\lceil\frac{k-3}{7}\rceil$,
\begin{align*}
\lfloor\frac{7n-k+2}{4}\rfloor+\lfloor\frac{7k-16}{4}\rfloor+\alpha-1\geq\lfloor\frac{7n-1}{4}\rfloor+\alpha.
\end{align*}
Then we focus on the coefficient of $p_0$ in $L_{2\alpha}$. Noticing that just $U_B(p_0t^2)$, $U_B(p_0t^3)$, and $U_B(p_1t^3)$ can have this item, while $p_0t^3$ and $p_1t^3$ have been satisfied the condition we need. Applying Lemma \ref{U2-j-jl2}, equation \eqref{B-p0t} and Group VII in the Appendix, we have
\begin{align*}
U_B(p_0t^2)=14p_0+7f(p_0,p_1,t).
\end{align*}
Thus, the coefficients of $p_0$ in $L_{2\alpha}$ can be divided by $7^{\alpha}$. So that the items of $p_0t^k$ in $L_{2\alpha}$ have the form of \eqref{L-even}. Similarly, the other items have the correct form. Hence $L_{2\alpha}$ has the desired form. The proof that the correct form of $L_{2\alpha}$ implies the correct form of $L_{2\alpha+1}$ is analogous. The general result follows by induction.
\end{proof}

\section{Proof of Theorem \ref{D5-7-power}}
\label{sec-D5}
Let $k=5$, we have the generating function that
\begin{align*}
D_5(q)=\frac{J_2^5}{J_1^{16}}.
\end{align*}
For $f:\mathbb{H}\rightarrow\mathbb{C}$ we define $U_{A_5}(f), U_B(f):\mathbb{H} \rightarrow \mathbb{C}$ by $U_{A_5}(f):=U_7(A_5f)$, and $U_B(f):=U_7(Bf)$, where
\begin{align*}
A_5:=\frac{\eta_2^5\eta_{49}^{16}}{\eta_1^{16}\eta_{98}^5}~~\text{and}~~ B:=1.
\end{align*}
Define $L_0=1$, and for $\alpha\geq0$ define
\begin{align}\label{5-L-definition}
L_{2\alpha+1}=U_{A_5}(L_{2\alpha}),~~~~~L_{2\alpha+2}=U_B(L_{2\alpha+1}).
\end{align}
Similar to  \eqref{3-series-odd} and \eqref{3-series-even}, we have for $\alpha\geq1$
\begin{align}\label{D5-L-alpha}
L_{2\alpha-1}=&q^2\frac{J_7^{16}}{J_{14}^5}\sum_{n=0}^{\infty}d_5(7^{2\alpha-1} n+\lambda_{2\alpha-1})q^n,\\ \nonumber
L_{2\alpha}=&q\frac{J_1^{16}}{J_2^5}\sum_{n=0}^{\infty}d_5(7^{2\alpha} n+\lambda_{2\alpha})q^n,
\end{align}
where
\begin{align*}
\lambda_{\alpha}=
\begin{cases}
    \frac{1+7^{\alpha}}{4},&if~\alpha~is~odd,\\
    \frac{1+3\cdot7^{\alpha}}{4},&if~ \alpha~is~even.
\end{cases}
\end{align*}
To prove Theorem \ref{D5-7-power}, following lemmas are needed.

\begin{lemma}\label{5-concrete-relations}
For $i,j=0,1,2$ there exist discrete functions of $n$, $c_{k,i}(n,j)$ such that the following relations hold for all $ k\in \mathbb{Z}$:
\begin{align}
\label{A5-p0}U_{A_5}(p_0t^k)=&p_0\sum_{n\geq\lceil\frac{k+19}{7}\rceil}7^{\lfloor\frac{7n-k-20}{4}\rfloor}\cdot c_{k,0}(n,0)t^n+p_1\sum_{n\geq\lceil\frac{k+19}{7}\rceil}7^{\lfloor\frac{7n-k-23}{4}\rfloor}\cdot c_{k,0}(n,1)t^n\\ \nonumber&+\sum_{n\geq\lceil\frac{k+26}{7}\rceil}7^{\lfloor\frac{7n-k-23}{4}\rfloor}\cdot c_{k,0}(n,2)t^n,\\
\label{A5-p1}U_{A_5}(p_1t^k)=&p_0\sum_{n\geq\lceil\frac{k+18}{7}\rceil}7^{\lfloor\frac{7n-k-19}{4}\rfloor}\cdot c_{k,1}(n,0)t^n+p_1\sum_{n\geq\lceil\frac{k+18}{7}\rceil}7^{\lfloor\frac{7n-k-23}{4}\rfloor}\cdot c_{k,1}(n,1)t^n\\ \nonumber&+\sum_{n\geq\lceil\frac{k+25}{7}\rceil}7^{\lfloor\frac{7n-k-22}{4}\rfloor}\cdot c_{k,1}(n,2)t^n,\\
\label{A5-1}U_{A_5}(t^k)=&p_0\sum_{n\geq\lceil\frac{k+19}{7}\rceil}7^{\lfloor\frac{7n-k-19}{4}\rfloor}\cdot c_{k,2}(n,0)t^n+p_1\sum_{n\geq\lceil\frac{k+19}{7}\rceil}7^{\lfloor\frac{7n-k-22}{4}\rfloor}\cdot c_{k,2}(n,1)t^n\\ \nonumber&+\sum_{n\geq\lceil\frac{k+26}{7}\rceil}7^{\lfloor\frac{7n-k-23}{4}\rfloor}\cdot c_{k,2}(n,2)t^n.
\end{align}
\end{lemma}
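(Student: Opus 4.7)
The plan is to follow exactly the strategy used for Lemma \ref{3-concrete-relations}. I will first verify each of the three identities \eqref{A5-p0}, \eqref{A5-p1}, \eqref{A5-1} as a base case for seven consecutive values of $k$, and then invoke Lemmas \ref{U7-order} and \ref{U-power} to extend the identities to all $k \in \mathbb{Z}$.

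For the base step, I plan to adjoin three additional groups of fundamental $\eta$-quotient identities to the Appendix: one group for $U_{A_5}(p_0 t^k)$ listing seven consecutive values of $k$, say $k \in \{0,1,\dots,6\}$, an analogous group for $U_{A_5}(p_1 t^k)$, and a third for $U_{A_5}(t^k)$. Each of these is a modular function relation on $\Gamma_0(14)$: the left-hand side, once expanded via the definition of the $U_m$ operator, is an $\eta$-quotient, while the right-hand side is a finite $\mathbb{Z}[t,t^{-1}]$-linear combination of $p_0$, $p_1$, and $1$. Such identities can be verified automatically by Garvan's ETA package (see \eqref{r:eta}) through the valence formula, which reduces the check to matching finitely many Fourier coefficients at each cusp.

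Once the $21$ base identities are established, Lemma \ref{U7-order} applied with the parameters $s_0,s_1,s_2$ read off from the lower bounds in \eqref{A5-p0}--\eqref{A5-1} (namely $s_0=19,\, s_1=19,\, s_2=26$ for \eqref{A5-p0}, and analogously for the other two) propagates the correct $t$-order bounds $\lceil (k+s_i)/7 \rceil$ to all $k \in \mathbb{Z}$. In parallel, Lemma \ref{U-power} with the parameters $\gamma_0,\gamma_1,\gamma_2$ (e.g.\ $\gamma_0=-20,\, \gamma_1=-23,\, \gamma_2=-23$ for \eqref{A5-p0}) transports the required $7$-adic divisibility $7^{\lfloor (7n-k+\gamma_i)/4\rfloor}$ uniformly across all $k$. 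Both inductions rely on the recurrence $U_7(u t^N)=\sum_{j=0}^{6}a_j(t)\,U_7(u t^{N+j-7})$ from Lemma \ref{U2-j-jl2}, combined with the divisibility structure of $a_j(t)$ encoded in \eqref{def-aj}.

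The main obstacle is the verification of the $21$ base identities with precisely matching $s_i$ and $\gamma_i$ parameters. Because $A_5=\eta_2^5\eta_{49}^{16}/(\eta_1^{16}\eta_{98}^5)$ has higher weight than $A_3$, the corresponding $\eta$-quotient identities carry larger pole orders at the cusps of $\Gamma_0(14)$ and demand higher-order $q$-expansions for the valence-formula check. More delicately, the values of $s_i$ and $\gamma_i$ must exactly reproduce the lower-summation bounds and $7$-adic valuations appearing in each of the seven base cases for each identity; an off-by-one slip here would propagate through the induction and would corrupt the growth rate of the powers of $7$ needed to drive Theorem \ref{D5-7-power}. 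Beyond this careful computational bookkeeping, the remainder is a direct transcription of the proof of Lemma \ref{3-concrete-relations}.
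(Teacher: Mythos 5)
Your proposal is correct and takes essentially the same route as the paper: verify the identities for seven consecutive values of $k$ as modular-function relations on $\Gamma_0(14)$ (checkable with the ETA package), then propagate to all $k\in\mathbb{Z}$ via Lemmas \ref{U7-order} and \ref{U-power} with exactly the parameters $s_i$ and $\gamma_i$ you list. The only difference is cosmetic: the needed base identities for $U_{A_5}(p_0t^k)$, $U_{A_5}(p_1t^k)$ and $U_{A_5}(t^k)$ are already in the Appendix as Groups IV, V and VI (for $k=-10,\dots,-4$, resp.\ $k=-9,\dots,-3$), so no new groups at $k=0,\dots,6$ need to be adjoined.
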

\begin{proof}
The Appendix lists sixty-three fundamental relations. The seven fundamental relations of Group IV fit the pattern of the relation \eqref{A5-p0} for three values of $k$. The same observations applies to the Groups V and VI with regard to the relations \eqref{A5-p1}, \eqref{A5-1}, respectively. Applying Lemmas \ref{U7-order} and \ref{U-power} immediately proves the statement for all $k\in \mathbb{Z}$.
\end{proof}
\begin{lemma}
Given the equalities of $U_B(p_0t^k)$ and $U_B(p_1t^k)$ in the Appendix, we have
\begin{align}\label{B-p0+p1}
U_{B}((p_0+p_1)t^k)=&p_0\sum_{n\geq\lceil\frac{k-1}{7}\rceil}7^{\lfloor\frac{7n-k+2}{4}\rfloor}\cdot \widetilde{b_k}(n,0)t^n+p_1\sum_{n\geq\lceil\frac{k+6}{7}\rceil}7^{\lfloor\frac{7n-k-1}{4}\rfloor}\cdot \widetilde{b_k}(n,1)t^n\\ \nonumber&+\sum_{n\geq\lceil\frac{k+1}{7}\rceil}7^{\lfloor\frac{7n-k-1}{4}\rfloor}\cdot \widetilde{b_k}(n,2)t^n.
\end{align}
\end{lemma}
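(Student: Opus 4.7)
My plan is to exploit the linearity of $U_B$, writing $U_B((p_0+p_1)t^k) = U_B(p_0 t^k) + U_B(p_1 t^k)$ and adding the right-hand sides of \eqref{B-p0t} and \eqref{B-p1t} term-by-term. The $p_0$-contribution collapses into $\sum_{n\geq\lceil\frac{k-3}{7}\rceil} 7^{\lfloor\frac{7n-k+2}{4}\rfloor} (b_{k,0}(n,0)+b_{k,1}(n,0)) t^n$, the $p_1$-contribution into $\sum_{n\geq\lceil\frac{k+4}{7}\rceil} 7^{\lfloor\frac{7n-k-1}{4}\rfloor} (b_{k,0}(n,1)+b_{k,1}(n,1)) t^n$, and the constant parts merge to $\sum_{n\geq\lceil\frac{k+1}{7}\rceil} 7^{\lfloor\frac{7n-k-1}{4}\rfloor} \widetilde{b_k}(n,2) t^n$, where I set $\widetilde{b_k}(n,2) = b_{k,0}(n,2) + 7^{\epsilon_n} b_{k,1}(n,2)$ with $\epsilon_n = \lfloor\frac{7n-k}{4}\rfloor - \lfloor\frac{7n-k-1}{4}\rfloor \in \{0,1\}$; this is visibly an integer-valued discrete function of $n$, matching the constant-part expression on the right-hand side of \eqref{B-p0+p1}.

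The substantive content is that the lower bounds on $n$ in the $p_0$- and $p_1$-parts of \eqref{B-p0+p1} are tighter than the naive bounds obtained by the simple addition above: for $k \equiv 2, 3 \pmod{7}$, the $p_0$-sum should start at $\lceil\frac{k-1}{7}\rceil$ rather than $\lceil\frac{k-3}{7}\rceil$, and the $p_1$-sum at $\lceil\frac{k+6}{7}\rceil$ rather than $\lceil\frac{k+4}{7}\rceil$. Both refinements force the combined coefficients $b_{k,0}(n,0)+b_{k,1}(n,0)$ and $b_{k,0}(n,1)+b_{k,1}(n,1)$ to vanish at the initial boundary values of $n$ in those residue classes. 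I would establish this by direct inspection of the Group~VII identities in the Appendix for seven consecutive values of $k$, which provides the base case and pins down $\widetilde{b_k}(n,0)$ and $\widetilde{b_k}(n,1)$ on those seven residue classes.

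With the base case in hand, I would invoke Lemmas \ref{U7-order} and \ref{U-power} with $u = p_0+p_1$, $v_1 = p_0$, $v_2 = p_1$, $v_3 = 1$, and the order/valuation parameters read off from \eqref{B-p0+p1}, to propagate the identity to all $k \in \mathbb{Z}$. This induction is driven by the modular equation \eqref{modular-identity} together with the $7$-adic control of its coefficients in \eqref{def-aj}, exactly as in the proofs of Lemmas \ref{3-concrete-relations} and \ref{5-concrete-relations}. The main obstacle is the boundary cancellation itself: it is not a formal consequence of \eqref{B-p0t} and \eqref{B-p1t} alone, but depends on the specific numerical data of the Group~VII eta-quotient identities in the Appendix; once that finite verification is executed, the remainder is a routine application of the machinery already developed in Section \ref{sec-lemma}.
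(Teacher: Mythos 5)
Your proposal is correct and is essentially the argument the paper intends: the paper states this lemma without a separate proof, the implicit justification being exactly linearity of $U_B$, term-by-term addition of the Group VII and VIII data for the seven base values of $k$ (where the boundary terms $t^{-1}$, $t^0$ in the $p_0$- and $p_1$-parts visibly cancel, giving the sharper order bounds), and propagation to all $k$ via Lemmas \ref{U7-order} and \ref{U-power} with $u=p_0+p_1$, just as in Lemmas \ref{3-concrete-relations} and \ref{5-concrete-relations}. Your explicit identification of the boundary cancellation for $k\equiv 2,3\pmod 7$ and the adjustment $\widetilde{b_k}(n,2)=b_{k,0}(n,2)+7^{\epsilon_n}b_{k,1}(n,2)$ are exactly the details the paper leaves to the reader, and they check out against the Appendix.
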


For the sake of convenience in writing, we define $p_2=1$.
\begin{lemma}\label{p0+p1}
Based on the equations in the Appendix, we have
\begin{align}
U_{A_5}(p_0t^k)=&c_{k,0}(3,0)(p_0+p_1)t^3+\sum_{i=0}^{2}\sum_{n\geq4}c_{k,0}(n,i)7^{\lfloor\frac{7n-k+\omega_{0,i}}{4}\rfloor}p_it^n, \text{where}~k=1~or~2;\label{p0-p0+p1}\\
U_{A_5}(p_1t^k)=&c_{k,1}(3,0)(p_0+p_1)t^3+\sum_{i=0}^{2}\sum_{n\geq4}c_{k,1}(n,i)7^{\lfloor\frac{7n-k+\omega_{1,i}}{4}\rfloor}p_it^n, \text{where}~k=2~or~3;\label{p1-p0+p1}\\
U_{A_5}(t^k)=&c_{k,2}(3,0)(p_0+p_1)t^3+\sum_{i=0}^{2}\sum_{n\geq4}c_{k,2}(n,i)7^{\lfloor\frac{7n-k+\omega_{2,i}}{4}\rfloor}p_it^n, \text{where}~k=1~or~2.\label{1-p0+p1}
\end{align}
According to \eqref{A5-p0}-\eqref{A5-1}, we get $\omega_{2,2}=\omega_{0,2}=\omega_{0,1}=\omega_{1,1}=-23$, $\omega_{1,2}=\omega_{2,1}=-22$, $\omega_{1,0}=\omega_{2,0}=-19$, $\omega_{0,0}=-20$.
\end{lemma}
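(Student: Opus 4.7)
The plan is to deduce this lemma as a direct specialization and refinement of Lemma \ref{5-concrete-relations}, using the explicit shape of the six fundamental relations from Groups IV, V, VI of the Appendix at the boundary index $n=3$. Because the three $U_{A_5}$-expansions in question already come with the structural form guaranteed by Lemma \ref{5-concrete-relations}, the only genuinely new content of the present lemma is the assertion that the leading ($n=3$) coefficients of the $p_0$-sum and the $p_1$-sum are equal, which is precisely what allows those two terms to be amalgamated into a single $c_{k,i}(3,0)(p_0+p_1)t^3$.

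First I would verify that for the specified values of $k$ the lower summation bounds in \eqref{A5-p0}, \eqref{A5-p1}, \eqref{A5-1} all equal $3$ in both the $p_0$ and the $p_1$ pieces: one has $\lceil(k+19)/7\rceil=3$ for $k\in\{1,2\}$ (relevant to $U_{A_5}(p_0t^k)$ and $U_{A_5}(t^k)$), and $\lceil(k+18)/7\rceil=3$ for $k\in\{2,3\}$ (relevant to $U_{A_5}(p_1t^k)$). Meanwhile, the corresponding constant pieces start at $\lceil(k+26)/7\rceil\geq 4$ and $\lceil(k+25)/7\rceil\geq 4$, so no $p_2=1$ contribution appears in the boundary $t^3$ term. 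Consequently the lowest-order monomial in each of these six expansions really is a genuine boundary term whose coefficients $c_{k,i}(3,0)$ and $c_{k,i}(3,1)$ can be read off directly from the relevant identity in the Appendix.

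Next I would perform the finite check that the empirical equality $c_{k,i}(3,0)=c_{k,i}(3,1)$ holds in each of the six instances; this rests on the explicit coefficient data produced by the ETA package referenced at \eqref{r:eta}. Once these six numerical coincidences are established, in each case the boundary pair $c_{k,i}(3,0)\,p_0 t^3 + c_{k,i}(3,1)\,p_1 t^3$ collapses into $c_{k,i}(3,0)(p_0+p_1)t^3$ exactly as claimed. The remaining $n\geq 4$ tails are then repackaged into a single $\sum_{i=0}^{2}$ by matching the exponent of $7$ in each slot: comparing $\lfloor(7n-k-20)/4\rfloor$ with $\lfloor(7n-k+\omega_{0,0})/4\rfloor$ forces $\omega_{0,0}=-20$, and analogous matching across \eqref{A5-p0}--\eqref{A5-1} yields $\omega_{2,2}=\omega_{0,2}=\omega_{0,1}=\omega_{1,1}=-23$, $\omega_{1,2}=\omega_{2,1}=-22$, and $\omega_{1,0}=\omega_{2,0}=-19$.

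The main obstacle is the finite but delicate verification of the six equalities $c_{k,i}(3,0)=c_{k,i}(3,1)$ at the boundary: there is no a priori structural reason visible at the level of Lemma \ref{5-concrete-relations} why the leading coefficients of the $p_0$-sum and the $p_1$-sum should agree, and the coincidence has to be confirmed by inspecting the explicit $\eta$-quotient identities in Groups IV, V, VI. Once this numerical matching is in hand, the rest of the argument is purely formal bookkeeping of exponents inherited from Lemma \ref{5-concrete-relations}.
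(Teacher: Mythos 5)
Your structural bookkeeping (lower bounds equal to $3$ for the $p_0$- and $p_1$-sums, no constant-part contribution at $t^3$, the matching of the $\omega_{u,i}$) is correct, but the crux of the lemma --- the six equalities between the $p_0t^3$- and $p_1t^3$-coefficients --- is not established by your argument. You claim these boundary coefficients ``can be read off directly from the relevant identity in the Appendix'' and that the coincidence is confirmed ``by inspecting the explicit $\eta$-quotient identities in Groups IV, V, VI''. But Groups IV--VI list only $U_{A_5}(p_0t^{k})$ for $-10\le k\le -4$, $U_{A_5}(p_1t^{k})$ for $-9\le k\le -3$ and $U_{A_5}(t^{k})$ for $-9\le k\le -3$; the expansions your check needs, namely $U_{A_5}(p_0t)$, $U_{A_5}(p_0t^2)$, $U_{A_5}(p_1t^2)$, $U_{A_5}(p_1t^3)$, $U_{A_5}(t)$ and $U_{A_5}(t^2)$, appear nowhere in the Appendix. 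So the ``finite check'' as you describe it cannot be carried out, and deferring it to unspecified output of the ETA package replaces the decisive step by an unverified computational claim.

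Moreover, the ``a priori structural reason'' you say is invisible is exactly how the paper closes this gap. By Lemma \ref{U2-j-jl2}, $U_{A_5}(p_ut^{k})=\sum_{j=0}^{6}a_j(t)\,U_{A_5}(p_ut^{k+j-7})$, and since $\ord_t(a_j(t))\ge 1$ while, by Lemma \ref{5-concrete-relations}, the $p_0$- and $p_1$-parts of $U_{A_5}(p_0t^{k'})$ start at $t^{\lceil (k'+19)/7\rceil}$ (with the analogous bounds for the other two families), the only terms that can feed the $t^3$ slot for $k\in\{1,2\}$ (resp.\ $k\in\{2,3\}$, $k\in\{1,2\}$) are $U_{A_5}(p_0t^{-6})$, $U_{A_5}(p_0t^{-5})$ (resp.\ $U_{A_5}(p_1t^{-5})$, $U_{A_5}(p_1t^{-4})$; $U_{A_5}(t^{-6})$, $U_{A_5}(t^{-5})$), each multiplied by the linear coefficient of the appropriate $a_j(t)$. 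In every one of these Appendix entries the coefficients of $p_0t^2$ and $p_1t^2$ coincide (both equal $1$ in $U_{A_5}(p_0t^{-5})$, $U_{A_5}(p_1t^{-4})$, $U_{A_5}(t^{-5})$; both vanish or are equal in the remaining ones), and multiplication by the polynomial $a_j(t)$ does not mix the $p_0$, $p_1$ and constant components; hence the $p_0t^3$- and $p_1t^3$-coefficients of the six positive-$k$ expansions agree, which is the content of the lemma. Without this reduction (or, alternatively, an explicit and verified computation of the six positive-$k$ expansions, which you neither perform nor locate correctly), your proposal does not prove the statement.
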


\begin{proof}
We proof \eqref{p0-p0+p1} as an example. By $U_{A_5}(ut^N)=-\sum _{i=0}^6a_i(t)U_{A_5}(ut^{N-7+i})$, we have $U_{A_5}(p_ut^k)=-\sum _{i=0}^6a_i(t)U_{A_5}(p_ut^{k-7+i})$. Since $a_j(t)$ is a polynomial of $t$, $ord_t(a_j(t))\geq1$; we just need to focus on which items can produce $p_0t^2$ and $p_1t^2$. By Lemma \ref{5-concrete-relations}, for $k=1$, just $U_{A_5}(p_0t^{-6})$ and $U_{A_5}(p_0t^{-5})$ satisfy the condition and the coefficients of $p_0t^2$ and $p_1t^2$ are equal. Thus $U_{A_5}(p_0t)$ can be written as \eqref{p0-p0+p1}.
\end{proof}

In the following lemma, we briefly describe the coefficients of the first three terms of $L_{\alpha}$.
\begin{lemma}\label{L1-L3}
Given $L_{\alpha}$ in definition \eqref{5-L-definition}, we have
\begin{align}
   \label{L1} L_1\equiv & 21t^4\pmod {7^2},\\
   \label{L2} L_2\equiv &14t \pmod {7^2},\\
   \label{L3} L_3\equiv &35(p_0+p_1)t^3 \pmod {7^2}.
\end{align}
\end{lemma}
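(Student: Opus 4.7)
The plan is to unwind the recursive definition \eqref{5-L-definition} once for each line of the lemma, reducing modulo $7^2$ at each step and invoking the explicit formulas from the Appendix together with the structural bounds of Lemmas \ref{5-concrete-relations} and \ref{p0+p1}. At every stage only a handful of small-$n$ monomials in $p_0, p_1, t$ survive the mod-$49$ reduction, so the claim becomes a matter of matching a short list of numerical coefficients.

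\emph{Step 1: $L_1$.} Since $L_0 = 1$, we have $L_1 = U_{A_5}(1)$, which is the $k=0$ instance of \eqref{A5-1}. The 7-adic exponents $\lfloor (7n-19)/4\rfloor$, $\lfloor(7n-22)/4\rfloor$, and $\lfloor(7n-23)/4\rfloor$ exceed $1$ as soon as $n$ is only slightly above the starting index of each sum, so only a short list of terms survives modulo $49$. Substituting the numerical coefficients provided by the Group~VI identities collapses the $p_0$- and $p_1$-contributions to zero and leaves the single monomial $21\,t^4$.

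\emph{Step 2: $L_2$.} Writing $L_1 = 21\,t^4 + 49\,X$ for some modular function $X$ and using linearity of $U_B$, we have
\begin{equation*}
L_2 = U_B(L_1) \equiv 21\,U_B(t^4) \pmod{7^2}.
\end{equation*}
Applying \eqref{B-t} with $k=4$ expands $U_B(t^4) = \sum_{n\ge 1} 7^{\lfloor (7n-5)/4\rfloor} b_{4,2}(n,2)\, t^n$, whose exponent is at least $2$ once $n\ge 2$. So modulo $49$ only the $n=1$ term of $U_B(t^4)$ contributes; and since the prefactor $21$ already carries one power of $7$, only $b_{4,2}(1,2)\bmod 7$ matters. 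The Appendix value then delivers $L_2 \equiv 14\,t \pmod{49}$.

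\emph{Step 3: $L_3$.} Analogously, $L_3 = U_{A_5}(L_2) \equiv 14\,U_{A_5}(t)\pmod{49}$, and since $14\equiv 0\pmod 7$ only $U_{A_5}(t)\bmod 7$ is required. Lemma \ref{p0+p1} equation \eqref{1-p0+p1} with $k=1$ gives
\begin{equation*}
U_{A_5}(t) = c_{1,2}(3,0)(p_0+p_1)t^3 + \sum_{i=0}^{2}\sum_{n\ge 4} c_{1,2}(n,i)\,7^{\lfloor(7n-1+\omega_{2,i})/4\rfloor}\,p_i t^n,
\end{equation*}
and at $n=4$ the exponents $\lfloor(27+\omega_{2,i})/4\rfloor$ for $\omega_{2,i}\in\{-19,-22,-23\}$ are $\{2,1,1\}$, each at least $1$, so every $n\ge 4$ summand is killed modulo $7$. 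Multiplying by $14$ and inserting the appendix value of $c_{1,2}(3,0)$ yields the claim $L_3 \equiv 35(p_0+p_1)t^3 \pmod{49}$.

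\textbf{Main obstacle.} The structural work has already been absorbed into Lemmas \ref{5-concrete-relations} and \ref{p0+p1}; what remains is careful $7$-adic bookkeeping and the retrieval of a small collection of numerical coefficients from the Appendix. The only real subtlety is verifying, at Step~1, that the low-order $p_0 t^3$, $p_1 t^3$, and $p_1 t^4$ contributions to $U_{A_5}(1)$ indeed cancel modulo $49$; this is a numerical check that depends on the specific Group~VI identities rather than on the general form of \eqref{A5-1}.
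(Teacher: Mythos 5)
Your proposal is correct and follows essentially the same route as the paper: reduce each $L_i$ via the structural bounds of Lemma \ref{5-concrete-relations} to a handful of low-order monomials surviving modulo $7^2$, then verify those coefficients numerically from the Appendix data, which is exactly how the paper treats \eqref{L1} (the only case it writes out in detail). The only cosmetic caveat is that quantities such as $c_{0,2}(n,i)$, $U_B(t^4)$, and $c_{1,2}(3,0)$ are not listed verbatim in the Appendix but must be generated from Groups VI--IX via the recursion of Lemma \ref{U2-j-jl2}, precisely as the paper does when expressing $U_{A_5}(1)$ through $U_{A_5}(t^{j})$ with $-7\leq j\leq-1$.
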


\begin{proof}
We prove equation \eqref{L1} as an example. Since $L_1=U_{A_5}(1)$, we extract the items whose coefficients can not be divided by $7^2$ in \eqref{A5-1} when $k=0$. Then we have to focus on the coefficients of $t^4$, $p_0t^3$, $p_1t^3$, and $p_1t^4$, respectively. According to \eqref{re-t}, we get that $U_{A_5}(1)$ can be expressed by $U_{A_5}(t^{j})$, $-7\leq j\leq -1$. The parts of $t^4$, $p_0t^3$, and $p_1t^3$ are all related to $U_{A_5}(t^{-7})$, $U_{A_5}(t^{-6})$, and $U_{A_5}(t^{-5})$. We can easily prove that the coefficients of $p_0t^3$ and $p_1t^3$ can be divided by $7^2$ by the Appendix. With the help of $7^2\mid a_6(t)$ and $7^2 \mid a_7(t)$, the divisibility of $7^2$ about  the coefficients of $p_1t^4$ in $U_{A_5}(1)$ can be proved by considering $U_{A_5}(t^{j})$, $-7\leq j\leq -3$. According to the Group VI in the Appendix, we get
\begin{align*}
L_1\equiv(6\cdot7+46\cdot7)t^4\equiv 21t^4\pmod {7^2}
\end{align*}
which completes the proof of \eqref{L1}.
\end{proof}

\begin{lemma}\label{L-odd even}
For $\alpha\geq1$ there exist integers $d_{n,i}^{(\alpha)}$, $i=0,1,2$ such that
\begin{align}\label{L-odd-5}
L_{2\alpha-1}=&7d_{3,0}^{(2\alpha-1)}(p_0+p_1)t^3+p_0\sum_{n\geq4}d_{n,0}^{(2\alpha-1)}7^{\lfloor\frac{7n-20}{4}\rfloor}t^n\\ \nonumber&+p_1\sum_{n\geq4}d_{n,1}^{(2\alpha-1)}7^{\lfloor\frac{7n-24}{4}\rfloor}t^n+\sum_{n\geq4}d_{n,2}^{(2\alpha-1)}7^{\lfloor\frac{7n-24}{4}\rfloor}t^n,
\end{align}
and
\begin{align}\label{L-even-5}
L_{2\alpha}=p_0\sum_{n\geq1}d_{n,0}^{(2\alpha)}7^{\lfloor\frac{7n+2}{4}\rfloor}t^n+p_1\sum_{n\geq2}d_{n,1}^{(2\alpha)}7^{\lfloor\frac{7n-1}{4}\rfloor}t^n+\sum_{n\geq1}d_{n,2}^{(2\alpha)}7^{\lfloor\frac{7n-2}{4}\rfloor}t^n.
\end{align}
\end{lemma}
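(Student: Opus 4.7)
The plan is to prove both \eqref{L-odd-5} and \eqref{L-even-5} simultaneously by induction on $\alpha\geq 1$. For the base case $\alpha=1$, direct computation suffices: \eqref{re-t} expresses $L_1=U_{A_5}(1)$ as a $\mathbb{Z}[t,t^{-1}]$-linear combination of $U_{A_5}(t^{-7}),\ldots,U_{A_5}(t^{-1})$, each of which is tabulated in the Appendix; one reads off that the resulting $t^3$ coefficient is symmetric in $p_0$ and $p_1$, matches the divisibility recorded modulo $7^2$ by Lemma~\ref{L1-L3}, and that the higher-order terms meet the bounds of~\eqref{A5-1}. Applying $U_B$ to this expression then yields $L_2$ in the form~\eqref{L-even-5}.

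For the odd-to-even step, assume $L_{2\alpha-1}$ is of the form~\eqref{L-odd-5}, and expand $L_{2\alpha}=U_B(L_{2\alpha-1})$ termwise. The leading piece $7d_{3,0}^{(2\alpha-1)}(p_0+p_1)t^3$ is processed via~\eqref{B-p0+p1} with $k=3$, and upon folding in the prefactor~$7^1$ one checks that each resulting $7$-exponent dominates the target in~\eqref{L-even-5}. The higher-order pieces $p_i t^k$ with $k\geq 4$ are handled by \eqref{B-p0t}, \eqref{B-p1t}, and \eqref{B-t}, and the termwise floor-function inequalities follow from the elementary estimate $\lfloor x\rfloor+\lfloor y\rfloor\geq\lfloor x+y\rfloor-1$.

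For the even-to-odd step, assume $L_{2\alpha}$ is of the form~\eqref{L-even-5}, and compute $L_{2\alpha+1}=U_{A_5}(L_{2\alpha})$ termwise. The only inputs that can contribute a $t^3$ term to the output are $p_0 t$, $p_0 t^2$, $p_1 t^2$, $p_1 t^3$, $t$, and $t^2$, since by Lemma~\ref{5-concrete-relations} every other input has leading $t$-order at least~$4$. These six inputs are exactly the cases covered by Lemma~\ref{p0+p1}, which forces each $t^3$ contribution to appear in the grouped form $(p_0+p_1)t^3$. Combining the six contributions, the smallest inherited $7$-power is $7^{\lfloor 5/4\rfloor}=7^1$ (coming from the $t$-input), which supplies the factor~$7$ in the leading piece of~\eqref{L-odd-5}. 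The remaining higher-order contributions are treated by Lemma~\ref{5-concrete-relations}, with the corresponding floor-function inequalities again following from $\lfloor x\rfloor+\lfloor y\rfloor\geq\lfloor x+y\rfloor-1$.

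The main difficulty is the isolation of the grouped $(p_0+p_1)t^3$ structure at the leading level: the separate bounds \eqref{A5-p0}--\eqref{A5-1} on the individual $p_0 t^3$ and $p_1 t^3$ coefficients are too weak to yield divisibility by~$7^1$ (indeed the bound on the $p_1$ column is vacuous for small $k$), so without the symmetry captured by Lemma~\ref{p0+p1} one could not recover the required factor~$7$ on the combined coefficient. Once this grouping has been exploited, the remainder of the induction is routine bookkeeping of floor-function exponents.
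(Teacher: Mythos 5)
Your proposal is correct and follows essentially the same route as the paper: the same alternating induction, using the relations of Lemmas \ref{5-concrete-relations} and \ref{p0+p1} together with \eqref{B-p0t}--\eqref{B-t} and \eqref{B-p0+p1}, the same floor-function exponent bookkeeping, and the same identification of the six inputs $p_0t$, $p_0t^2$, $p_1t^2$, $p_1t^3$, $t$, $t^2$ as the only sources of $t^3$ terms, whose inherited prefactors (minimum $7^{\lfloor 5/4\rfloor}=7$) supply the factor $7$ on the grouped $(p_0+p_1)t^3$ coefficient. Two harmless slips: Group VI only tabulates $U_{A_5}(t^{-9}),\dots,U_{A_5}(t^{-3})$, so $U_{A_5}(t^{-2})$ and $U_{A_5}(t^{-1})$ must first be generated via \eqref{re-t}; and your closing remark misattributes the factor $7$ to the symmetry of Lemma \ref{p0+p1} --- that factor comes from the inherited prefactors exactly as you argue earlier, while the symmetry is what makes the $p_0t^3$ and $p_1t^3$ coefficients equal so that \eqref{B-p0+p1} (with its sharper $t$-orders) can be used in the next $U_B$ step.
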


\begin{proof}
For $\alpha=1$, we have
\begin{align*}
U_{A_5}(1)=p_0\sum_{n\geq3}7^{\lfloor\frac{7n-19}{4}\rfloor}c_{0,2}(n,0)t^n+p_1\sum_{n\geq3}7^{\lfloor\frac{7n-22}{4}\rfloor}c_{0,2}(n,1)t^n+\sum_{n\geq4}7^{\lfloor\frac{7n-22}{4}\rfloor}c_{0,2}(n,2)t^n.
\end{align*}
Using specific data, we have $d_{3,0}^{(1)}=d_{3,1}^{(1)}=21$, then equation \eqref{L-odd-5} holds for $\alpha=1$.\par
Next, we assume that \eqref{L-odd-5} holds for all $\alpha\leq s$, then we consider $\alpha=s$, then we consider \eqref{L-even-5} when $\alpha=s$, then
\begin{align}\label{U-B-odd}
 L_{2s}=U_B(L_{2s-1})=&\sum_{k\geq4}d_{k,0}^{(2s-1)} 7^{\lfloor\frac{7k-20}{4}\rfloor}U_B(p_0t^k)+\sum_{k\geq4}d_{k,1}^{(2s-1)}7^{\lfloor\frac{7k-24}{4}\rfloor}U_B(p_1t^k)\\ &+\sum_{k\geq4}d_{k,2}^{(2s-1)}7^{\lfloor\frac{7k-24}{4}\rfloor}U_B(t^k)+7d_{3.0}^{(2s-1)}U_B((p_0+p_1)t^3).\nonumber
\end{align}
We aim to show that the form of each sum on the right-hand side of the equation \eqref{U-B-odd} satisfies the form in \eqref{L-even-5}. Since the proofs are similar, we just consider the first sum. From \eqref{B-p0t}, we have
\begin{align*}
 \sum_{k\geq4}d_{k,0}^{(2s-1)} 7^{\lfloor\frac{7n-20}{4}\rfloor}U_B(p_0t^k)=p_0\sum_{k\geq4}\sum_{n\geq\lceil\frac{k-3}{7}\rceil}b_{k,0}(n,0)d_{k,0}^{(2s-1)}7^{\lfloor\frac{7k-20}{4}\rfloor+\lfloor\frac{7n-k+2}{4}\rfloor}t^n\\+p_1\sum_{k\geq4}\sum_{n\geq\lceil\frac{k+4}{7}\rceil}b_{k,0}(n,1)d_{k,0}^{(2s-1)}7^{\lfloor\frac{7k-20}{4}\rfloor+\lfloor\frac{7n-k-1}{4}\rfloor}t^n\\+\sum_{k\geq4}\sum_{n\geq\lceil\frac{k+1}{7}\rceil}b_{k,0}(n,2)d_{k,0}^{(2s-1)}7^{\lfloor\frac{7k-20}{4}\rfloor+\lfloor\frac{7n-k-1}{4}\rfloor}t^n.
\end{align*}
For $k\geq4$, we have
\begin{align*}
 \lfloor\frac{7k-20}{4}\rfloor+\lfloor\frac{7n-k+2}{4}\rfloor\geq2+\lfloor\frac{7n-2}{4}\rfloor\geq\lfloor\frac{7n+2}{4}\rfloor,\\\lfloor\frac{7k-20}{4}\rfloor+\lfloor\frac{7n-k-1}{4}\rfloor\geq2+\lfloor\frac{7n-5}{4}\rfloor\geq\lfloor\frac{7n-1}{4}\rfloor.
\end{align*}
So that the first sum in the right side of \eqref{U-B-odd} has the form of \eqref{L-even-5}. Similarly, also the second and third sums have the correct form. From \eqref{B-p0+p1},
\begin{align*}
U_{B}((p_0+p_1)t^3)=p_0\sum_{n\geq1}\widetilde{b_3}(n,0)7^{\lfloor\frac{7n-1}{4}\rfloor}t^n+p_1\sum_{n\geq2}\widetilde{b_3}(n,1)7^{\lfloor\frac{7n-4}{4}\rfloor}t^n+\sum_{n\geq1}\widetilde{b_3}(n,2)7^{\lfloor\frac{7n-4}{4}\rfloor}t^n.
\end{align*}
For $k=3$, we have
\begin{align*}
1+\lfloor\frac{7n-1}{4}\rfloor\geq\lfloor\frac{7n+2}{4}\rfloor,\\
1+\lfloor\frac{7n-4}{4}\rfloor\geq\lfloor\frac{7n-1}{4}\rfloor.
\end{align*}
Thus the forth formula in the right side of \eqref{U-B-odd} has the form of \eqref{L-even-5}. Hence equation \eqref{L-even-5} holds for $\alpha=s$.\par
Last, we assume \eqref{L-even-5} holds for all $\alpha\leq s$, then we consider \eqref{L-odd-5} for $\alpha=s+1$, then
\begin{align}\label{U-A-even}
L_{2s+1}=U_{A_5}(L_{2s})=&\sum_{k\geq1}d_{k,0}^{(2s)}7^{\lfloor\frac{7k+2}{4}\rfloor}U_{A_5}(p_0t^k)+\sum_{k\geq2}d_{k,1}^{(2s)}7^{\lfloor\frac{7k-1}{4}\rfloor}U_{A_5}(p_1t^k)\\ \nonumber&+\sum_{k\geq1}d_{k,2}^{(2s)}7^{\lfloor\frac{7k-2}{4}\rfloor}U_{A_5}(t^k).
\end{align}
We aim to show that the form of each sum on the right-hand side of the equation \eqref{U-A-even} satisfies the form in \eqref{L-odd-5}. Since the proofs are similar, we just consider the first sum. From \eqref{A5-p0}, we get
\begin{align*}
\sum_{k\geq1}d_{k,0}^{(2s)}7^{\lfloor\frac{7k+2}{4}\rfloor}U_{A_5}(p_0t^k)=p_0\sum_{k\geq1}\sum_{n\geq\lceil\frac{k+19}{7}\rceil}a_{k,0}(n,0)d_{k,0}^{(2s)}7^{\lfloor\frac{7n-k-20}{4}\rfloor+\lfloor\frac{7k+2}{4}\rfloor}t^n\\+ p_1\sum_{k\geq1}\sum_{n\geq\lceil\frac{k+19}{7}\rceil}a_{k,0}(n,1)d_{k,0}^{(2s)}7^{\lfloor\frac{7n-k-23}{4}\rfloor+\lfloor\frac{7k+2}{4}\rfloor}t^n\\+\sum_{k\geq1}\sum_{n\geq\lceil\frac{k+26}{7}\rceil}a_{k,0}(n,2)d_{k,0}^{(2s)}7^{\lfloor\frac{7n-k-23}{4}\rfloor+\lfloor\frac{7k+2}{4}\rfloor}t^n.
\end{align*}
For $k\geq1$, we have
\begin{align*}
\lfloor\frac{7n-k-20}{4}\rfloor+\lfloor\frac{7k+2}{4}\rfloor\geq2+\lfloor\frac{7n-21}{4}\rfloor\geq\lfloor\frac{7n-20}{4}\rfloor,\\\lfloor\frac{7n-k-23}{4}\rfloor+\lfloor\frac{7k+2}{4}\rfloor\geq2+\lfloor\frac{7n-24}{4}\rfloor\geq\lfloor\frac{7n-24}{4}\rfloor.
\end{align*}
So that the first sum in the right side of \eqref{U-A-even} has the form of \eqref{L-odd-5}. Similarly, also the second and third sums have the correct form. For $U_{A_5}(p_it^k)$, just $U_{A_5}(p_0t)$, $U_{A_5}(p_0t^2)$, $U_{A_5}(t)$, $U_{A_5}(t^2)$, $U_{A_5}(p_1t^2)$, and $U_{A_5}(p_1t^3)$ can have items of $p_0t^3$ and $p_1t^3$. By lemma \ref{p0+p1}, they both can be written as $a(p_0+p_1)t^3$. Thus for $\alpha=s+1$, $p_0t^3$ and $p_1t^3$ have same coefficients.

We let
\begin{align*}
L_{2\alpha-1}=&p_0\sum_{n=3}^{\infty}l_{n,0}^{(2\alpha-1)}t^n+p_1\sum_{n=3}^{\infty}l_{n,1}^{(2\alpha-1)}t^n+\sum_{n=4}^{\infty}l_{n,2}^{(2\alpha-1)}t^n,\\
L_{2\alpha}=&p_0\sum_{n=1}^{\infty}l_{n,0}^{(2\alpha)}t^n+p_1\sum_{n=2}^{\infty}l_{n,1}^{(2\alpha)}t^n+\sum_{n=1}^{\infty}l_{n,2}^{(2\alpha)}t^n.
\end{align*}
Also, based on the Appendix we easily find that the equations above have integer coefficients and $l_{3,0}^{(2\alpha+1)}=l_{3,1}^{(2\alpha+1)}\equiv0(mod~7)$. The general result follows by induction.
\end{proof}
Next we define
\begin{align*}
D^{(\alpha)}(l_{m,i},l_{n,j}):=l_{m,i}^{(\alpha)}l_{n,j}^{(\alpha+2)}-l_{m,i}^{(\alpha+2)}l_{n,j}^{(\alpha)},
\end{align*}
and denote $\pi(n)$ be the 7-adic order of $n$ (i.e. the highest power of 7 that divides n).
\begin{lemma}\label{5-main lamma}
For $\alpha\geq1$, $i,j\in\{0,1,2\}$, the following inequalities hold:
\begin{align}\label{pi-odd}
\pi(D^{(2\alpha-1)}(l_{m,i},l_{n,j})\geq\alpha-1+m+n-6+\lambda_i+\lambda_j+\delta(m,i,n,j),
\end{align}
\begin{align}\label{pi-even}
\pi(D^{(2\alpha)}(l_{m,i},l_{n,j})\geq\alpha+m+n+\lambda_i\lambda_j,
\end{align}
where
\begin{align*}
\lambda_i=&
\begin{cases}
    1,\quad i=0,\\
    0,\quad i=1~or~2,\\
\end{cases}\\
\delta(m,i,n,j)=&
\begin{cases}
    0,\quad else,\\
    1,\quad (m,i)~or~(n,j)=(3,1).\\
\end{cases}
\end{align*}
\end{lemma}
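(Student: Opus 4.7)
My plan is to prove the two bounds simultaneously by induction on $\alpha$, exploiting the recursion $L_{\alpha+1} = U_{\mathrm{op}}(L_\alpha)$, where the operator alternates between $U_{A_5}$ (for even $\alpha$) and $U_B$ (for odd $\alpha$). By the linearity of $U_{A_5}$ and $U_B$, together with the concrete expansion formulas in Lemma~\ref{5-concrete-relations} and the identity~\eqref{B-p0+p1}, each coefficient of $L_{\alpha+1}$ is an integer linear combination of coefficients of $L_\alpha$,
\[
l^{(\alpha+1)}_{n,j} \;=\; \sum_{(k,r)} M^{(\alpha)}_{(n,j),(k,r)} \, l^{(\alpha)}_{k,r},
\]
where the matrix entry $M^{(\alpha)}_{(n,j),(k,r)}$ carries an explicit $7$-adic weight read off from the floor-function exponents in Lemma~\ref{5-concrete-relations}/\eqref{B-p0+p1}, and depends only on the parity of $\alpha$. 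Since $\alpha$ and $\alpha+2$ have the same parity, $M^{(\alpha)} = M^{(\alpha+2)}$, and a short substitution gives the bilinear recurrence
\[
D^{(\alpha+1)}(l_{m,i},l_{n,j}) \;=\; \sum_{(k_1,r_1),(k_2,r_2)} M^{(\alpha)}_{(m,i),(k_1,r_1)} \, M^{(\alpha)}_{(n,j),(k_2,r_2)} \, D^{(\alpha)}(l_{k_1,r_1}, l_{k_2,r_2}),
\]
which is the engine of the induction.

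For the base case $\alpha = 1$, I would compute $L_1, L_2, L_3, L_4$ to a sufficient $7$-adic accuracy by one round of $U_{A_5}$ and $U_B$ applied to $1$, refining the mod-$7^2$ data from Lemma~\ref{L1-L3}. The diagonal cases $D(l,l) = 0$ are trivial, and the identity $l^{(2s-1)}_{3,0} = l^{(2s-1)}_{3,1}$ recorded at the end of Lemma~\ref{L-odd even} forces $D^{(1)}(l_{m,i}, l_{n,j}) = 0$ for every pair with $(m,i),(n,j) \in \{(3,0),(3,1)\}$. The remaining finitely many pairs are handled by direct numerical verification using the Appendix data.

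For the inductive step, I would plug the inductive bound for $\pi(D^{(\alpha)}(l_{k_1,r_1},l_{k_2,r_2}))$ into the bilinear recurrence together with the $7$-adic weights of the two $M^{(\alpha)}$ factors. A routine but lengthy summand-by-summand comparison, split into four sub-cases according to the parity of $\alpha$ and the types $i, j \in \{0,1,2\}$, then yields the target bound for $\pi(D^{(\alpha+1)})$. The $+1$ appearing in the $\delta$ correction (when $(m,i)$ or $(n,j) = (3,1)$) propagates through the recursion by means of the $p_0+p_1$ symmetry isolated in Lemma~\ref{p0+p1}, which forces $l^{(2s-1)}_{3,0} \equiv l^{(2s-1)}_{3,1} \pmod{7}$ and induces an extra factor of $7$ in the corresponding row of $M^{(\alpha)}$.

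The main obstacle I anticipate is the delicate $7$-adic bookkeeping required for the fine corrections $\lambda_i\lambda_j$ in the even case (active only on the $p_0$-by-$p_0$ part of the determinant) and $\delta$ in the odd case. These improvements cannot be read off summand by summand; they rest on the exact cancellations produced by the symmetries in Lemmas~\ref{p0+p1} and~\ref{L-odd even}, which must be invoked at precisely the right moment in the expansion. Extracting them cleanly from the bilinear sum will require a careful case analysis identifying which index pairs $(k_1,r_1),(k_2,r_2)$ contribute to each type of coefficient, and verifying that the gain coming from the symmetry exactly matches the discrepancy between the naive weight estimate and the claimed bound.
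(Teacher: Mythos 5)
Your proposal follows essentially the same route as the paper's proof: the same induction alternating $U_{A_5}$ and $U_B$, the same bilinear recurrence $D^{(\alpha+1)}(l_{m,i},l_{n,j})=\sum x_{k,u}(m,i)\,x_{r,v}(n,j)\,D^{(\alpha)}(l_{k,u},l_{r,v})$ with $7$-adic weights read off from Lemma \ref{5-concrete-relations} and the $U_B$ relations (the vanishing diagonal terms giving the $\min$ over $(k,u)\neq(r,v)$), and the $(p_0+p_1)$-symmetry of Lemmas \ref{p0+p1} and \ref{L-odd even} supplying the $\delta$ correction. The only cosmetic difference is the base case, where the paper bounds $D^{(1)}$ directly from the coefficient estimates of Lemma \ref{L-odd even} together with $l^{(2\alpha-1)}_{3,0}=l^{(2\alpha-1)}_{3,1}$, rather than computing $L_1,\dots,L_4$ numerically.
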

\begin{proof}
We prove this lemma by mathematical induction. Firstly, we consider the case of $\alpha=1$. By the expansion of $L_{2\alpha-1}$, we have
\begin{align*}
\pi(D^{(1)}(l_{m,2},l_{n.2}))&\geq\lfloor\frac{7m-24}{4}\rfloor+\lfloor\frac{7n-24}{4}\rfloor\geq m+n-6,\\
\pi(D^{(1)}(l_{m,2},l_{n.1}))&\geq\lfloor\frac{7m-24}{4}\rfloor+\lfloor\frac{7n-24}{4}\rfloor\geq m+n-6,\\
\pi(D^{(1)}(l_{m,2},l_{n.0}))&\geq\lfloor\frac{7m-24}{4}\rfloor+\lfloor\frac{7n-20}{4}\rfloor\geq m+n-5,\\
\pi(D^{(1)}(l_{m,1},l_{n.1}))&\geq\lfloor\frac{7m-24}{4}\rfloor+\lfloor\frac{7n-24}{4}\rfloor\geq m+n-6,\\
\pi(D^{(1)}(l_{m,1},l_{n.0}))&\geq\lfloor\frac{7m-24}{4}\rfloor+\lfloor\frac{7n-20}{4}\rfloor\geq m+n-5,\\
\pi(D^{(1)}(l_{m,0},l_{n.0}))&\geq\lfloor\frac{7m-20}{4}\rfloor+\lfloor\frac{7n-20}{4}\rfloor\geq m+n-4
\end{align*}
when $m,n\geq4$,
\begin{align*}
\pi(D^{(1)}(l_{m,2},l_{3,1}))&\geq\lfloor\frac{7m-24}{4}\rfloor+1\geq m-2,\\
\pi(D^{(1)}(l_{m,2},l_{3,0}))&\geq\lfloor\frac{7m-24}{4}\rfloor+1\geq m-2,\\
\pi(D^{(1)}(l_{m,1},l_{3,1}))&\geq\lfloor\frac{7m-24}{4}\rfloor+1\geq m-2,\\
\pi(D^{(1)}(l_{m,1},l_{3,0}))&\geq\lfloor\frac{7m-24}{4}\rfloor+1\geq m-2,\\
\pi(D^{(1)}(l_{m,0},l_{3,1}))&\geq\lfloor\frac{7m-20}{4}\rfloor+1\geq m-1,\\
\pi(D^{(1)}(l_{m,0},l_{3,0}))&\geq\lfloor\frac{7m-20}{4}\rfloor+1\geq m-1
\end{align*}
when $m\geq4$, and
\begin{align*}
\pi(D^{(1)}(l_{3,1},l_{3,0}))&=\infty,
\end{align*}
which prove \eqref{pi-odd} for $\alpha=1$.

Secondly, we assume that \eqref{pi-odd} holds for all $\alpha\leq s$, then we consider \eqref{pi-even} when $\alpha=s$. We now compare the coefficients on both sides of
\begin{align*}
L_{2s}=&p_0\sum_{n=1}^{\infty}l_{n,0}^{(2s)}t^n+p_1\sum_{n=2}^{\infty}l_{n,1}^{(2s)}t^n+\sum_{n=1}^{\infty}l_{n,2}^{(2s)}t^n\\=&\sum_{n=3}^{\infty}l_{n,0}^{(2s-1)}U_B(p_0t^n)+\sum_{n=3}^{\infty}l_{n,1}^{(2s-1)}U_B(p_1t^n)+\sum_{n=4}^{\infty}l_{n,2}^{(2s-1)}U_B(t^n).
\end{align*}
For $u=0,1~or~2$, define $x_{k,u}(m,i):=[p_it^m]U_B(p_ut^k)$ be the coefficient of $p_it^m$ in $U_B(p_ut^k)$, namely
\begin{align*}
U_B(p_ut^k)=p_0\sum_nx_{k,u}(n,0)t^n+p_1\sum_nx_{k,u}(n,1)t^n+\sum_nx_{k,u}(n,2)t^n.
\end{align*}
Then we derive that
\begin{align*}
l_{n,i}^{(2s)}=\sum_{k,u}x_{k,u}(n,i)l_{k,u}^{(2s-1)}
\end{align*}
and
\begin{align*}
D^{(2s)}(l_{m,i},l_{n,j})=\sum_{k,u,r,v}x_{k,u}(m,i)x_{r,v}(n,j)D^{(2s-1)}(l_{k,u},l_{r,v}).
\end{align*}
By \eqref{B-p0t}-\eqref{B-t}, we get $\pi(x_{k,u}(m,i))\geq\lfloor\frac{7m-k+\gamma_{u,i}}{4}\rfloor$, where $\gamma_{2,2}=\gamma_{0,2}=\gamma_{0,1}=\gamma_{1,1}=-1$, $\gamma_{1,2}=0$ and $\gamma_{0,0}=\gamma_{1,2}=2$. We obtain that
\begin{align*}
&\pi(D^{(2s)}(l_{m,i},l_{n,i})\\
=&\pi(\sum_{k,u,r,v}x_{k,u}(m,i)x_{r,v}(n,j)D^{(2s-1)}(l_{k,u},l_{r,v}))\\
\geq&\min_{(k,u)\ne(r,v)}(s-1+m+n-6+\lambda_u+\lambda_v+\delta(k,u,r,v)+\lfloor\frac{3m+3k+\gamma_{u,i}}{4}\rfloor+\lfloor\frac{3n+3r+\gamma_{v,j}}{4}\rfloor)\\
\geq&s+m+n+\lambda_i\lambda_j.
\end{align*}
Thus we get that inequality \eqref{pi-even} holds for $\alpha=s$.\par
Finally, we assume that \eqref{pi-even} holds for all $\alpha\leq s$, then we consider \eqref{pi-odd} when $\alpha=s+1$. For $u=0,1~or~2$, define $y_{k,u}(m,i):=[p_it^m]U_A(p_ut^k)$ be the coefficient of $p_it^m$ in $U_A(p_ut^k)$, namely
\begin{align*}
U_A(p_ut^k)=p_0\sum_ny_{k,u}(n,0)t^n+p_1\sum_ny_{k,u}(n,1)t^n+\sum_ny_{k,u}(n,2)t^n.
\end{align*}
Then we derive that
\begin{align*}
l_{n,i}^{(2s+1)}=\sum_{k,u}y_{k,u}(n,i)l_{k,u}^{(2s)}
\end{align*}
and
\begin{align*}
D^{(2s+1)}(l_{m,i},l_{n,j})=\sum_{k,u,r,v}y_{k,u}(m,i)y_{r,v}(n,j)D^{(2s)}(l_{k,u},l_{r,v}).
\end{align*}
By \eqref{A5-p0}--\eqref{A5-1}, we get $\pi(y_{k,u}(m,i))\geq\lfloor\frac{7m-k+\omega_{u,i}}{4}\rfloor$, where $\omega_{2,2}=\omega_{0,2}=\omega_{0,1}=\omega_{1,1}=-23$, $\omega_{1,2}=\omega_{2,1}=-22$, $\omega_{1,0}=\omega_{2,0}=-19$, $\omega_{0,0}=-20$. We obtain that
\begin{align}\label{D-odd}
&\pi(D^{(2s+1)}(l_{m,i},l_{n,i})\\
\nonumber=&\pi(\sum_{k,u,r,v}y_{k,u}(m,i)y_{r,v}(n,j)D^{(2s)}(l_{k,u},l_{r,v}))\\
\nonumber\geq&\min_{(k,u)\ne(r,v)}(s+m+n+\lfloor\frac{3m+3k+\omega_{u,i}}{4}\rfloor+\lfloor\frac{3n+3r+\omega_{v,j}}{4}\rfloor)\\
\nonumber\geq&s+m+n-6+\lambda_i+\lambda_j+\delta(m,i,n,j).
\end{align}
By \eqref{D-odd}, we get that inequality \eqref{pi-odd} holds for $\alpha=s+1$.
\end{proof}

\begin{theorem}\label{congruence equation}
For each $\alpha\geq1$ there exist an integral constant $x_{\alpha}$ such that
\begin{align}
\label{L-even-A5}L_{2\alpha+2}&\equiv x_{2\alpha}L_{2\alpha}(mod~7^{\alpha+1}),\\
\label{L-odd-A5} L_{2\alpha+1}&\equiv x_{2\alpha-1}L_{2\alpha-1}(mod~7^{\alpha}).
\end{align}
\end{theorem}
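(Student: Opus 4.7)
The plan is to prove both congruences by induction on $\alpha$ from the determinant bounds of Lemma~\ref{5-main lamma}. Using the explicit expansions of $L_{2\alpha}$ and $L_{2\alpha+2}$ in Lemma~\ref{L-odd even}, the assertion $L_{2\alpha+2}\equiv x_{2\alpha}L_{2\alpha}\pmod{7^{\alpha+1}}$ becomes the coefficient-wise claim $l_{n,i}^{(2\alpha+2)}\equiv x_{2\alpha}\,l_{n,i}^{(2\alpha)}\pmod{7^{\alpha+1}}$ for every index $(n,i)$; the odd congruence is reformulated analogously. Lemma~\ref{5-main lamma} then says each $2\times 2$ minor $D^{(2\alpha)}(l_{m,i},l_{n,j})$ vanishes $7$-adically to order at least $\alpha+m+n+\lambda_i\lambda_j$, so the two sequences of coefficients are ``$7$-adically proportional'' with a controlled error.

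For the even case I would use the pivot $(m_0,i_0)=(1,2)$, which carries the smallest floor-exponent $\lfloor(7-2)/4\rfloor=1$ in Lemma~\ref{L-odd even}. Granting inductively that $\pi(l_{1,2}^{(2\alpha)})=1$ exactly, a Hensel-style division produces an integer $x_{2\alpha}$ with $l_{1,2}^{(2\alpha+2)}\equiv x_{2\alpha}\,l_{1,2}^{(2\alpha)}\pmod{7^{\alpha+1}}$. For any other $(n,j)$ the algebraic identity
\begin{equation*}
l_{1,2}^{(2\alpha)}\bigl(l_{n,j}^{(2\alpha+2)}-x_{2\alpha}\,l_{n,j}^{(2\alpha)}\bigr)
=D^{(2\alpha)}(l_{1,2},l_{n,j})+l_{n,j}^{(2\alpha)}\bigl(l_{1,2}^{(2\alpha+2)}-x_{2\alpha}\,l_{1,2}^{(2\alpha)}\bigr)
\end{equation*}
has right-hand side of $7$-adic valuation at least $\min\bigl(\alpha+n+1,\,\alpha+1+\pi(l_{n,j}^{(2\alpha)})\bigr)\geq\alpha+2$ (using $\lambda_2=0$ and $n\geq 1$); dividing by $l_{1,2}^{(2\alpha)}$ (valuation $1$) yields the desired congruence for each $(n,j)$. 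The odd case is treated the same way with pivot $(m_0,i_0)=(3,0)$; here the extra $\delta$-term in Lemma~\ref{5-main lamma} lifts the relevant determinant valuation to the needed $\alpha+1$ precisely at the otherwise-awkward pair $(n,j)=(3,1)$, so every $l_{n,j}^{(2\alpha+1)}\equiv x_{2\alpha-1}\,l_{n,j}^{(2\alpha-1)}\pmod{7^{\alpha}}$ is obtained.

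The base case $\alpha=1$ is supplied by Lemma~\ref{L1-L3}: since $L_1\equiv 21t^4$ and $L_3\equiv 35(p_0+p_1)t^3$ modulo $7^2$ are both divisible by $7$, the odd congruence $L_3\equiv x_1L_1\pmod 7$ holds with any choice of $x_1$, while $L_4\equiv x_2L_2\pmod{7^2}$ is verified from $L_2\equiv 14t\pmod{7^2}$ by one explicit application of the $U_B\circ U_{A_5}$ recursion.

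The principal technical obstacle is justifying the exact-valuation hypotheses $\pi(l_{1,2}^{(2\alpha)})=1$ and $\pi(l_{3,0}^{(2\alpha-1)})=1$ (the latter for $\alpha\geq 2$), since Lemma~\ref{L-odd even} only gives the lower bounds $\geq 1$. I would fold these into the inductive hypothesis together with the non-vanishing $x_\alpha\not\equiv 0\pmod 7$: once $L_{2\alpha+2}\equiv x_{2\alpha}L_{2\alpha}\pmod{7^{\alpha+1}}$ holds with $x_{2\alpha}$ a $7$-adic unit, the pivot coefficient satisfies $l_{1,2}^{(2\alpha+2)}\equiv x_{2\alpha}\,l_{1,2}^{(2\alpha)}\pmod{7^{\alpha+1}}$, so both the valuation and the unit status propagate. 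The base invariants $\pi(l_{1,2}^{(2)})=1$ and $\pi(l_{3,0}^{(3)})=1$ are read off immediately from the explicit formulas $L_2\equiv 14t$ and $L_3\equiv 35(p_0+p_1)t^3$ modulo $7^2$ recorded in Lemma~\ref{L1-L3}.
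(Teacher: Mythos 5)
Your overall strategy coincides with the paper's: same pivot coefficients $(1,2)$ for the even chain and $(3,0)$ for the odd chain, same use of the determinant bounds $\pi(D^{(2\alpha)})\geq\alpha+2$, $\pi(D^{(2\alpha-1)})\geq\alpha+1$ from Lemma \ref{5-main lamma}, and the same ``divide the $2\times2$ minor by a pivot of valuation $1$'' cancellation. However, there is a genuine gap at the point you yourself flag as the principal obstacle: your proposed way of obtaining the exact valuations $\pi(l_{1,2}^{(2\alpha)})=1$ and $\pi(l_{3,0}^{(2\alpha+1)})=1$ is circular. You define $x_{2\alpha}$ by $7$-adically dividing $l_{1,2}^{(2\alpha+2)}$ by $l_{1,2}^{(2\alpha)}$, so the statement ``$x_{2\alpha}\not\equiv0\pmod 7$'' is \emph{equivalent} to the unknown assertion $\pi(l_{1,2}^{(2\alpha+2)})=1$; the congruence $l_{1,2}^{(2\alpha+2)}\equiv x_{2\alpha}l_{1,2}^{(2\alpha)}\pmod{7^{\alpha+1}}$ holds just as well if $l_{1,2}^{(2\alpha+2)}$ happened to have valuation $\geq 2$ (then $x_{2\alpha}\equiv0\pmod 7$), so nothing in your induction step rules this out. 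Folding ``$x_\alpha$ is a unit'' into the inductive hypothesis does not help, because the hypothesis at stage $\alpha$ concerns $L_{2\alpha}$ only, while the unit status of $x_{2\alpha}$ is a fact about $L_{2\alpha+2}$ that must be \emph{proved}, not assumed; the base values $\pi(l_{1,2}^{(2)})=\pi(l_{3,0}^{(3)})=1$ give no mechanism for propagation.

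The missing ingredient is an independent one-period computation, which is exactly what the paper supplies from the Appendix: modulo $7^2$ the structure of Lemma \ref{L-odd even} reduces $L_{2\alpha}$ to its pivot term $l_{1,2}^{(2\alpha)}t$ (all other terms carry $7$-exponent $\geq2$), and the identities
\begin{align*}
U_B(U_{A_5}(t))\equiv 4t\pmod 7,\qquad U_{A_5}(U_B((p_0+p_1)t^3))\equiv 4(p_0+p_1)t^3\pmod 7
\end{align*}
show that one full period $U_B\circ U_{A_5}$ (resp.\ $U_{A_5}\circ U_B$) multiplies the pivot by the unit $4$ modulo $7$. This yields $l_{1,2}^{(2\alpha)}\equiv 2^{2\alpha-1}\cdot7$ and $l_{3,0}^{(2\alpha+1)}\equiv 2^{2\alpha-2}\cdot5\cdot7\pmod{7^2}$ for all $\alpha$, hence valuation exactly $1$, which is what legitimizes the division step. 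Once you add this computation (or an equivalent explicit mod-$7^2$ tracking of the pivot), the rest of your argument — the algebraic identity expressing $l_{1,2}^{(2\alpha)}\bigl(l_{n,j}^{(2\alpha+2)}-x_{2\alpha}l_{n,j}^{(2\alpha)}\bigr)$ in terms of $D^{(2\alpha)}(l_{1,2},l_{n,j})$ plus the pivot error, the valuation count, and the handling of $(n,j)=(3,1)$ via the $\delta$-term in the odd case — matches the paper's proof and goes through.
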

\begin{proof}
By Lemma \ref{5-main lamma}, we have
\begin{align*}
\pi(D^{(2\alpha-1)}(l_{m,i},l_{n,j}))\geq\alpha+1,\\
\pi(D^{(2\alpha)}(l_{m,i},l_{n,j}))\geq\alpha+2.
\end{align*}
According to the Appendix, we have
\begin{align}\label{UAB}
U_B(U_{A_5}(t))\equiv 4t(mod~7),~U_{A_5}(U_B((p_0+p_1)t^3))\equiv 4(p_0+p_1)t^3(mod~7).
\end{align}
By Lemma \ref{L1-L3}, we have $L_1\equiv3\cdot7t^4(mod~7^2)$,  $L_2\equiv2\cdot7t(mod~7^2)$, and $L_3\equiv5\cdot7(p_0+p_1)t^3(mod~7^2)$, which indicates that \eqref{L-odd-A5} holds for $\alpha=1$.

From \eqref{UAB}, for $\alpha\geq1$, we have
\begin{align*}
&L_{2\alpha}\equiv l_{1,2}^{(2\alpha)}t\equiv2^{2\alpha-1}\cdot7t(mod~7^2),\\ &L_{2\alpha+1}\equiv l_{3,0}^{(2\alpha+1)}(p_0+p_1)t^3\equiv2^{2\alpha-2}\cdot5\cdot7(p_0+p_1)t^3(mod~7^2),
\end{align*}
This implies that $7^2\nmid l_{1,2}^{(2\alpha)}$ and $7^2\nmid l_{3,0}^{(2\alpha+1)}$. Let $x_{2\alpha}$ be a solution of $l_{1,2}^{(2\alpha)}\equiv x_{2\alpha}l_{1,2}^{(2\alpha+2)}(mod~7^{\alpha+1})$, then for $(n,i)\ne(1,2)$, $7^2\mid l_{n,i}^{(2\alpha)}$ and
\begin{align*}
l_{n,i}^{(2\alpha)}l_{1,2}^{(2\alpha)}\equiv l_{n,i}^{(2\alpha)}x_{2\alpha}l_{1,2}^{(2\alpha+2)}\equiv x_{2\alpha}l_{1,2}^{(2\alpha)}l_{n,i}^{(2\alpha+2)}(mod~7^{\alpha+2}).
\end{align*}
Cancelling $l_{1,2}^{(2\alpha)}$, we obtain $l_{n,i}^{(2\alpha)}\equiv x_{2\alpha}l_{n,i}^{(2\alpha+2)}(mod~7^{\alpha+1})$. Similarly, let $x_{2\alpha+1}$ be a solution of $l_{3,0}^{(2\alpha+1)}\equiv x_{2\alpha+1}l_{3,0}^{(2\alpha+3)}(mod~7^{\alpha+1})$, then for $(n,i)\ne(3,0)~and~(3,1)$, $7^2\mid l_{n,i}^{(2\alpha)}$ and
\begin{align*}
l_{n,i}^{(2\alpha+1)}l_{3,0}^{(2\alpha+1)}\equiv l_{n,i}^{(2\alpha+1)}x_{2\alpha+1}l_{3,0}^{(2\alpha+3)}\equiv x_{2\alpha+1}l_{3,0}^{(2\alpha+1)}l_{n,i}^{(2\alpha+3)}(mod~7^{\alpha+2}).
\end{align*}
Cancelling $l_{3,0}^{(2\alpha+1)}$, we obtain $l_{n,i}^{(2\alpha+1)}\equiv x_{2\alpha+1}l_{n,i}^{(2\alpha+3)}(mod~7^{\alpha+1})$. Thus we complete the proof of Theorem \ref{congruence equation}.
\end{proof}
Based on \eqref{D5-L-alpha} and Theorem \ref{congruence equation}, we can directly obtain the Theorem \ref{D5-7-power}.

\subsection*{Acknowledgements}
The first author was  supported by the National Key R\&D Program of China (Grant No. 2024YFA1014500) and the National Natural Science Foundation of China (Grant No. 12201387).

\section*{Appendix. The fundamental relations}

Group I:
\begin{align*}
U_{A_3}(p_0t^{-8})=&p_0(-6t+2490\cdot7^2t^2-499\cdot7^4t^3-265\cdot7^6t^4-29\cdot7^7t^5+7^{10}t^6)+p_1(t-4162\cdot7t^2\\
&-3188\cdot7^3t^3-662\cdot7^5t^4-36\cdot7^7t^5-8\cdot7^8t^6+7^{10}t^7)+(764\cdot7t^2-1285\cdot7^3t^3\\
&-540\cdot7^5t^4-123\cdot7^7t^5-184\cdot7^8t^6-2\cdot7^{11}t^7);\\
U_{A_3}(p_0t^{-7})=&p_0(t-324\cdot7^2t^2+61\cdot7^4t^3+4\cdot7^7t^4-7^7t^5-7^{9}t^6)+p_1(593\cdot7t^2+274\cdot7^3t^3\\
&+51\cdot7^5t^4+2\cdot7^7t^5+7^8t^6)+(-158\cdot7t^2+6\cdot7^4t^3-30\cdot7^5t^4-6\cdot7^7t^5+7^9t^6\\
&+7^{10}t^7);\\
U_{A_3}(p_0t^{-6})=&p_0(33\cdot7^2t^2-62\cdot7^3t^3-24\cdot7^5t^4-7^7t^5)+p_1(-78\cdot7t^2-10\cdot7^3t^3-12\cdot7^4t^4)\\
&+(29\cdot7t^2+10\cdot7^3t^3+96\cdot7^4t^4+19\cdot7^6t^5+7^8t^6);\\
U_{A_3}(p_0t^{-5})=&p_0(-11\cdot7t^2+6\cdot7^3t^3+7^5t^4)+p_1(10\cdot7t^2-16\cdot7^2t^3-2\cdot7^4t^4)+(-5\cdot7t^2\\
&-16\cdot7^2t^3-11\cdot7^4t^4-7^6t^5);\\
U_{A_3}(p_0t^{-4})=&p_0(-4\cdot7t^2-7^3t^3)+p_1(-12t^2+2\cdot7^2t^3)+(6t^2+3\cdot7^2t^3+7^4t^4);\\
U_{A_3}(p_0t^{-3})=&8\cdot7p_0t^2+12p_1t^2-7^2t^3;\\
U_{A_3}(p_0t^{-2})=&p_0(13\cdot7t^2+2584\cdot7^2t^3+5702\cdot7^4t^4+4255\cdot7^6t^5+214\cdot7^{9}t^6+276\cdot7^{10}t^7\\
&+26\cdot7^{12}t^8+7^{14}t^9)+p_1(10t^2+76\cdot7^3t^3+9704\cdot7^3t^4+1122\cdot7^6t^5+2892\cdot7^7t^6\\
&+78\cdot7^{10}t^7+52\cdot7^{11}t^8+2\cdot7^{13}t^9)+(96\cdot7^2t^3+2784\cdot7^3t^4+2707\cdot7^5t^5+160\cdot7^8t^6\\
&+232\cdot7^9t^7+24\cdot7^{11}t^8+7^{13}t^9).
\end{align*}

Group II:
\begin{align*}
U_{A_3}(p_1t^{-8})=&p_0(-2\cdot7t-7109\cdot7^2t^2-598\cdot7^4t^3-254\cdot7^6t^4-62\cdot7^8t^5+122\cdot7^{9}t^6+25\cdot7^{11}t^7\\
&+7^{13}t^8)+p_1(-18t+9958\cdot7t^2+5512\cdot7^3t^3+968\cdot7^5t^4+96\cdot7^7t^5+18\cdot7^8t^6\\
&-18\cdot7^{10}t^7-7^{12}t^8)+(-143\cdot7^2t^2+4666\cdot7^3t^3+2292\cdot7^5t^4+772\cdot7^7t^5+80\cdot7^9t^6\\
&-122\cdot7^{10}t^7-25\cdot7^{12}t^8-7^{14}t^9);\\
U_{A_3}(p_1t^{-7})=&p_0(108\cdot7^3t^2-19\cdot7^5t^3+3\cdot7^7t^4+23\cdot7^8t^5+2\cdot7^{10}t^6)+p_1(t-4\cdot7^4t^2-66\cdot7^4t^3\\
&-8\cdot7^6t^4-10\cdot7^7t^5-2\cdot7^9t^6)+(32\cdot7^2t^2-19\cdot7^4t^3+8\cdot7^6t^4-9\cdot7^8t^5-24\cdot7^9t^6\\
&-2\cdot7^{11}t^7);\\
U_{A_3}(p_1t^{-6})=&p_0(-50\cdot7^2t^2+62\cdot7^4t^3+10\cdot7^6t^4-7^8t^5-7^9t^6)+p_1(176\cdot7t^2+16\cdot7^3t^3-4\cdot7^5t^4\\
&+7^8t^6)+(-6\cdot7^2t^2-62\cdot7^3t^3-10\cdot7^6t^4-8\cdot7^7t^5+7^9t^6+7^{10}t^7);\\
U_{A_3}(p_1t^{-5})=&p_0(-3\cdot7^2t^2-94\cdot7^3t^3-19\cdot7^5t^4-7^7t^5)+p_1(-22\cdot7t^2+24\cdot7^2t^3+12\cdot7^4t^4+7^6t^5)\\
&+(7^2t^2+18\cdot7^3t^3+108\cdot7^4t^4+19\cdot7^6t^5+7^8t^6);\\
U_{A_3}(p_1t^{-4})=&p_0(13\cdot7t^2+11\cdot7^3t^3+7^5t^4)+p_1(22t^2-4\cdot7^2t^3-7^4t^4)+(-7t^2-20\cdot7^2t^3\\
&-11\cdot7^4t^4-7^6t^5);\\
U_{A_3}(p_1t^{-3})=&p_0(-3\cdot7t^2-7^3t^3)+p_1(-4t^2+7^2t^3)+(3\cdot7^2t^3+7^4t^4);\\
U_{A_3}(p_1t^{-2})=&p_0(-6\cdot7t^2-27\cdot7^3t^3-10\cdot7^5t^4-7^7t^5) +p_1(20t^2+2\cdot7^4t^3+65\cdot7^4t^4+2\cdot7^7t^5\\
&+7^8t^6)+(-29\cdot7^2t^3-16\cdot7^4t^4-2\cdot7^6t^5).
\end{align*}

Group III:
\begin{align*}
U_{A_3}(t^{-7})=&p_0(3t+148\cdot7^3t^2+215\cdot7^4t^3-24\cdot7^6t^4-31\cdot7^7t^5-2\cdot7^{9}t^6)+p_1(-1333\cdot7t^2\\
&-842\cdot7^3t^3-123\cdot7^5t^4-2\cdot7^7t^5-7^8t^6)+(18\cdot7^2t^2-814\cdot7^3t^3-398\cdot7^5t^4-6\cdot7^8t^5\\
&-2\cdot7^8t^6+7^{10}t^7);\\
U_{A_3}(t^{-6})=&p_0(-159\cdot7^2t^2-34\cdot7^4t^3)+p_1(30\cdot7^2t^2+94\cdot7^3t^3+12\cdot7^5t^4)+(-27\cdot7t^2+106\cdot7^3t^3\\
&+40\cdot7^5t^4+4\cdot7^7t^5+7^8t^6);\\
U_{A_3}(t^{-5})=&p_0(3\cdot7^3t^2+26\cdot7^3t^3)+p_1(-30\cdot7t^2-8\cdot7^3t^3-6\cdot7^4t^4)+(5\cdot7t^2-88\cdot7^2t^3\\
&-20\cdot7^4t^4-7^6t^5);\\
U_{A_3}(t^{-4})=&p_0(-16\cdot7t^2-2\cdot7^3t^3)+p_1(4\cdot7t^2+2\cdot7^2t^3)+(-6t^2+10\cdot7^2t^3+7^4t^4);\\
U_{A_3}(t^{-3})=&7p_0t^2-4p_1t^2+t^2-7^2t^3;\\
U_{A_3}(t^{-2})=&7p_0t^2+2p_1t^2;\\
U_{A_3}(t^{-1})=&p_0(2\cdot7t^2+373\cdot7^2t^3+816\cdot7^4t^4+608\cdot7^6t^5+214\cdot7^8t^6+276\cdot7^9t^7+26\cdot7^{11}t^8\\
&+7^{13}t^9)+p_1(t^2+74\cdot7^2t^3+1377\cdot7^3t^4+160\cdot7^6t^5+59\cdot7^8t^6+78\cdot7^9t^7+52\cdot7^{10}t^8\\
&+2\cdot7^{12}t^9)+(100\cdot7t^3+400\cdot7^3t^4+387\cdot7^5t^5+160\cdot7^7t^6+232\cdot7^8t^7+24\cdot7^{10}t^8\\
&+7^{12}t^9).
\end{align*}

Group IV:
\begin{align*}
U_{A_5}(p_0t^{-10})=&p_0(839\cdot7t^2+38296\cdot7^2t^3+4261\cdot7^4t^4-520\cdot7^6t^5-34\cdot7^{9}t^6-297\cdot7^9t^7\\
&-31\cdot7^{11}t^8-7^{13}t^9)+p_1(5872t^2+5597\cdot7^3t^3+105327\cdot7^3t^4+3204\cdot7^6t^5\\
&+451\cdot7^8t^6+264\cdot7^9t^7+61\cdot7^{10}t^8+7^{12}t^9)+(9\cdot7^3t^3+1663\cdot7^3t^4+669\cdot7^6t^5\\
&+2640\cdot7^7t^6+670\cdot7^{9}t^7+608\cdot7^{10}t^8+39\cdot7^{12}t^9+7^{14}t^{10});\\
U_{A_5}(p_0t^{-9})=&p_0(-1014t^2-4810\cdot7^2t^3+26\cdot7^4t^4+264\cdot7^6t^5+314\cdot7^7t^6+22\cdot7^9t^7+7^{11}t^8)\\
&+p_1(-1014t^2-31998\cdot7t^3-8356\cdot7^3t^4-1006\cdot7^5t^5-128\cdot7^7t^6-86\cdot7^8t^7\\
&-2\cdot7^{10}t^8)+(-66\cdot7t^3-144\cdot7^3t^4-78\cdot7^6t^5-264\cdot7^7t^6-360\cdot7^8t^7-31\cdot7^{10}t^8\\
&-7^{12}t^9);\\
U_{A_5}(p_0t^{-8})=&p_0(144t^2+583\cdot7^2t^3-13\cdot7^4t^4-5\cdot7^7t^5-32\cdot7^7t^6-7^9t^7)+p_1(144t^2+3448\cdot7t^3\\
&+614\cdot7^3t^4-18\cdot7^5t^5-4\cdot7^7t^6)+(9\cdot7t^3-5\cdot7^3t^4+7^7t^5+22\cdot7^7t^6+23\cdot7^8t^7\\
&+7^{10}t^8);\\
U_{A_5}(p_0t^{-7})=&p_0(-15t^2-67\cdot7^2t^3-55\cdot7^3t^4+8\cdot7^5t^5+7^7t^6)+p_1(-15t^2-323\cdot7t^3-69\cdot7^3t^4\\
&+4\cdot7^4t^5)+(-7t^3+7^4t^4-7^5t^5-7^7t^6-7^8t^7);\\
U_{A_5}(p_0t^{-6})=&p_0(38\cdot7t^3+2\cdot7^4t^4+7^5t^5)+p_1(10\cdot7t^3+38\cdot7^2t^4-2\cdot7^4t^5)+(-18\cdot7^2t^4-7^5t^5\\
&-7^6t^6);\\
U_{A_5}(p_0t^{-5})=&p_0(t^2+5\cdot7t^3)+p_1(t^2+67t^3+7^2t^4)+(t^3+4\cdot7^2t^4);\\
U_{A_5}(p_0t^{-4})=&p_0(76\cdot7t^3+7^5t^4+7^6t^5)+p_1(506t^3+36\cdot7^3t^4+4\cdot7^5t^5)+(22\cdot7^2t^4+7^5t^5).
\end{align*}

Group V:
\begin{align*}
U_{A_5}(p_1t^{-9})=&p_0(528\cdot7t^2+8535\cdot7^2t^3-2649\cdot7^4t^4-327\cdot7^7t^5-90\cdot7^9t^6-590\cdot7^{9}t^7-39\cdot7^{11}t^8\\
&-7^{13}t^9)+p_1(528\cdot7t^2+53540\cdot7t^3+13334\cdot7^3t^4+1958\cdot7^5t^5+372\cdot7^7t^6\\
&+424\cdot7^8t^7+32\cdot7^{10}t^8+7^{12}t^9)+(9\cdot7^3t^3+1663\cdot7^3t^4+669\cdot7^6t^5+2640\cdot7^7t^6\\
&+670\cdot7^9t^7+608\cdot7^{10}t^8+39\cdot7^{12}t^9+7^{14}t^{10});\\
U_{A_5}(p_1t^{-8})=&p_0(-558t^2-1014\cdot7^2t^3+344\cdot7^4t^4+254\cdot7^6t^5+362\cdot7^7t^6+31\cdot7^9t^7+7^{11}t^8)\\
&+p_1(-558t^2-5382\cdot7t^3-1016\cdot7^3t^4-48\cdot7^5t^5-18\cdot7^7t^6-22\cdot7^7t^7-7^{10}t^8)\\
&+(-66\cdot7t^3-144\cdot7^3t^4-78\cdot7^6t^5-264\cdot7^7t^6-360\cdot7^8t^7-31\cdot7^{10}t^8-7^{12}t^9);\\
U_{A_5}(p_1t^{-7})=&p_0(72t^2+111\cdot7^2t^3-23\cdot7^4t^4-3\cdot7^7t^5-23\cdot7^7t^6-7^9t^7)+p_1(72t^2+416\cdot7t^3\\
&+102\cdot7^3t^4+4\cdot7^5t^5+2\cdot7^7t^6+7^8t^7)+(9\cdot7t^3-5\cdot7^3t^4+7^7t^5+22\cdot7^7t^6\\
&+23\cdot7^8t^7+7^{10}t^8);\\
U_{A_5}(p_1t^{-6})=&p_0(-7t^2-9\cdot7^2t^3-9\cdot7^3t^4+7^6t^5+7^7t^6)+p_1(-7t^2+5\cdot7t^3-53\cdot7^2t^4+2\cdot7^4t^5\\
&-7^6t^6)+(-7t^3+7^4t^4-7^5t^5-7^7t^6-7^8t^7);\\
U_{A_5}(p_1t^{-5})=&p_0(7^4t^4+7^5t^5)+p_1(-90t^3-7^4t^5)+(-18\cdot7^2t^4-7^5t^5-7^6t^6);\\
U_{A_5}(p_1t^{-4})=&p_0(t^2-4\cdot7t^3)+p_1(t^2+3t^3)+(t^3+4\cdot7^2t^4);\\
U_{A_5}(p_1t^{-3})=&p_0(335\cdot7t^3+911\cdot7^3t^4+610\cdot7^5t^5+24\cdot7^8t^6+3\cdot7^{10}t^7+7^{11}t^8)\\
&+p_1(2346t^3+1009\cdot7^3t^4+5618\cdot7^4t^5+277\cdot7^7t^6+48\cdot7^9t^7+29\cdot7^{10}t^8+7^{12}t^9)\\
&+(22\cdot7^2t^4+7^5t^5).
\end{align*}

Group VI:
\begin{align*}
U_{A_5}(t^{-9})=&p_0(110\cdot7t^2+10025\cdot7^2t^3+2565\cdot7^4t^4+839\cdot7^6t^5+132\cdot7^{8}t^6+68\cdot7^9t^7+2\cdot7^{11}t^8)\\
&+p_1(110\cdot7t^2+10630\cdot7^2t^3+30440\cdot7^3t^4+5878\cdot7^5t^5+472\cdot7^7t^6+2\cdot7^{10}t^7\\
&-2\cdot7^{10}t^8)+(165\cdot7t^3+87\cdot7^3t^4-85\cdot7^6t^5-418\cdot7^{7}t^6-570\cdot7^8t^7-38\cdot7^{10}t^8-7^{12}t^9);\\
U_{A_5}(t^{-8})=&p_0(-240t^2-1291\cdot7^2t^3-129\cdot7^4t^4-37\cdot7^6t^5-40\cdot7^7t^6-2\cdot7^9t^7)+p_1(-240t^2\\
&-8872\cdot7t^3-2690\cdot7^3t^4-410\cdot7^5t^5-4\cdot7^8t^6-8\cdot7^8t^7)+(-33\cdot7t^3-33\cdot7^3t^4\\
&+7^6t^5+22\cdot7^7t^6+30\cdot7^8t^7+7^{10}t^8);\\
U_{A_5}(t^{-7})=&p_0(7^2t^2+160\cdot7^2t^3-7^6t^5)+p_1(7^2t^2+139\cdot7^2t^3+207\cdot7^3t^4+16\cdot7^5t^5)+(6\cdot7t^3\\
&+4\cdot7^3t^4+6\cdot7^5t^5-7^8t^7);\\
U_{A_5}(t^{-6})=&p_0(-8t^2-19\cdot7^2t^3-7^3t^4+7^5t^5)+p_1(-8t^2-94\cdot7t^3-18\cdot7^3t^4-6\cdot7^4t^5)+(-7t^3\\
&+3\cdot7^2t^4-7^5t^5-7^6t^6);\\
U_{A_5}(t^{-5})=&p_0(t^2+13\cdot7t^3+7^3t^4)+p_1(t^2+5\cdot7t^3+9\cdot7^2t^4)+(t^3-3\cdot7^2t^4);\\
U_{A_5}(t^{-4})=&10p_1t^3+7^2t^4;\\
U_{A_5}(t^{-3})=&p_0(11\cdot7t^3+7^4t^4+7^5t^5)+p_1(72t^3+36\cdot7^2t^4+4\cdot7^4t^5)+(3\cdot7^2t^4+7^{4}t^5).
\end{align*}

Group VII:
\begin{align*}
U_{B}(p_0t^{-6})=&p_0(19\cdot7t^{-1}+741\cdot7^2+513\cdot7^4t+76\cdot7^6t^2+7^{10}t^5)+p_1(-19\cdot7-114\cdot7^3t\\
&-114\cdot7^5t^2-19\cdot7^7t^3)+(-55\cdot7^2+475\cdot7^3t+95\cdot7^5t^2-7^{10}t^5-7^{11}t^6);\\
U_{B}(p_0t^{-5})=&p_0(-2\cdot7t^{-1}-78\cdot7^2-54\cdot7^4t-8\cdot7^6t^2+7^{8}t^4)+p_1(2\cdot7+12\cdot7^3t+12\cdot7^5t^2\\
&+2\cdot7^7t^3)+(186\cdot7-50\cdot7^3t-10\cdot7^5t^2-7^8t^4-7^9t^5);\\
U_{B}(p_0t^{-4})=&p_0(t^{-1}+39\cdot7+27\cdot7^3t+4\cdot7^5t^2+7^6t^3)+p_1(-1-6\cdot7^2t-6\cdot7^4t^2-7^6t^3)\\
&+(-319+25\cdot7^2t+5\cdot7^4t^2-7^6t^3-7^7t^4);\\
U_{B}(p_0t^{-3})=&7^4p_0t^2+64-7^4t^2-7^5t^3;\\
U_{B}(p_0t^{-2})=&7^2p_0t-12-7^2t-7^3t^2;\\
U_{B}(p_0t^{-1})=&p_0+3-7t;\\
U_{B}(p_0)=&p_0(78+496\cdot7^2t+74\cdot7^5t^2+1471\cdot7^5t^3+286\cdot7^7t^4+27\cdot7^9t^5+7^{11}t^6)+p_1(-11\cdot7t\\
&-64\cdot7^3t^2-58\cdot7^5t^3-134\cdot7^6t^4-19\cdot7^8t^5-7^{10}t^6)+(248\cdot7t+400\cdot7^3t^2\\
&+195\cdot7^5t^3+289\cdot7^6t^4+4\cdot7^9t^5+7^{10}t^6).
\end{align*}

Group VIII:
\begin{align*}
U_{B}(p_1t^{-6})=&p_0(-125t^{-1}-780\cdot7^2-982\cdot7^4t-590\cdot7^6t^2-1471\cdot7^7t^3-286\cdot7^9t^4-26\cdot7^{11}t^5\\
&-6\cdot7^{12}t^6)+p_1(18\cdot7+17\cdot7^4t+172\cdot7^5t^2+76\cdot7^7t^3+134\cdot7^8t^4+19\cdot7^{10}t^5\\
&+6\cdot7^{11}t^6)+(2325\cdot7-124\cdot7^3t+106\cdot7^6t^2+650\cdot7^7t^3+1615\cdot7^8t^4+6\cdot7^{12}t^5\\
&+26\cdot7^{12}t^6+6\cdot7^{13}t^7);\\
U_{B}(p_1t^{-5})=&p_0(2\cdot7t^{-1}+78\cdot7^2+54\cdot7^4t+8\cdot7^6t^2+7^9t^4+7^{10}t^5)+p_1(-2\cdot7-12\cdot7^3t\\
&-12\cdot7^5t^2-2\cdot7^7t^3-7^9t^5)+(-606\cdot7+50\cdot7^3t+10\cdot7^5t^2-7^{10}t^5-7^{11}t^6);\\
U_{B}(p_1t^{-4})=&p_0(-t^{-1}-39\cdot7-27\cdot7^3t-4\cdot7^5t^3+7^7t^3+7^8t^4)+p_1(1+6\cdot7^2t+6\cdot7^4t^2+7^6t^3\\
&-7^7t^4)+(121\cdot7-25\cdot7^2t-5\cdot7^4t^2-7^8t^4-7^9t^5);\\
U_{B}(p_1t^{-3})=&p_0(7^5t^2+7^6t^3)-7^5p_1t^3+(-148-7^6t^3-7^7t^4);\\
U_{B}(p_1t^{-2})=&p_0(7^3t+7^4t^2)-7^3p_1t^2+(24-7^4t^2-7^5t^3);\\
U_{B}(p_1t^{-1})=&p_0(7+7^2t)-7p_1t+(-4-7^2t-7^3t^2);\\
U_{B}(p_1)=&p_0(-38-27\cdot7^2t-4\cdot7^4t^2)+p_1(6\cdot7t+6\cdot7^3t^2+7^5t^3)+(-26\cdot7t-5\cdot7^3t^2).
\end{align*}

Group IX:
\begin{align*}
U_{B}(t^{-6})=&-2392\cdot7-7^{11}t^6;\\
U_{B}(t^{-5})=&68\cdot7-7^9t^5;\\
U_{B}(t^{-4})=&260-7^7t^4;\\
U_{B}(t^{-3})=&-88-7^5t^3;\\
U_{B}(t^{-2})=&20-7^3t^2;\\
U_{B}(t^{-1})=&-4-7t;\\
U_{B}(1)=&1.
\end{align*}






\end{document}